\newtheorem{theo}{Theorem}[section]
\newtheorem{defi}[theo]{Definition}
\newtheorem{prop}[theo]{Proposition}
\newtheorem{assu}{Assumption}
\newtheorem{exam}{Example}
\newtheorem{rema}[theo]{Remark}
\def\R{\mathbb{R}}
\def \N{\mathbb{N}}
\def \Z{\mathbb{Z}}
\def\1{\mathbf{1}}
\def \PP{\mathbf{P}} 
\def\EE{\mathbf {E}} 
\def \E{\mathbb{E}} 
\def \Es{\mathbb{E}^\star}
\def \Et{\mathbb{E}^\theta}
\def \P{\mathbb{P}} 
\def \Ps{\mathbb{P}^\star}
\newcommand{\ts}{{\theta^{\star}}}
\newcommand{\Vs}{{\mathcal{V}^\star}}
\newcommand{\htet}{\widehat \theta_n}
\newcommand{\PPt}{\mathbf{P}^{\theta}}
\newcommand{\PPs}{\mathbf{P}^{\star}}
\newcommand{\EEs}{\mathbf{E}^{\star}}
\def \eps{\varepsilon}
\def\w{\omega}
\def\t{\theta}
\def\to{\rightarrow}
\newcommand{\argmax}{\mathop{\rm Argmax}}
\definecolor{pink}{rgb}{1,0.3,0.8}
\def\dd{\mathrm{d}}
\def\ee{\mathrm{e}}
\def\to{\rightarrow}
\def\Beta{\mathrm{B}}
\def\proj{\mathrm{proj}}
\newenvironment{merci}{\textbf{Acknowledgments.}}{ }
\begin{document}

\DeclareGraphicsExtensions{.pdf, .jpg, .jpeg, .png, .ps}

\title{
Asymptotic normality and efficiency of the maximum likelihood estimator for the
parameter of a ballistic random walk in a random environment}

\author{Mikael {\sc Falconnet}\footnote{Laboratoire Statistique et G\'enome, Universit\'e d'\'Evry Val
d'Essonne, UMR CNRS 8071, USC INRA,     E-mail:                          {\tt
  $\{$mikael.falconnet, catherine.matias$\}$@genopole.cnrs.fr}; 
$^\dag$  Laboratoire Analyse et
Probabilit\'es, Universit\'e d'\'Evry Val d'Essonne, 
E-mail: {\tt dasha.loukianova@univ-evry.fr}
}
\and 
Dasha {\sc Loukianova}$^{\dag}$ 
\and 
Catherine {\sc Matias}$^{*}$
}

\maketitle

\begin {abstract}
We  consider a  one dimensional  ballistic random  walk evolving  in a
parametric    independent   and    identically    distributed   random
environment. We study the asymptotic properties of 
the maximum likelihood estimator of the parameter based on a single observation
of the  path till  the time it  reaches a  distant site.  We  prove an
asymptotic  normality  result for  this  consistent  estimator as  the
distant  site tends  to infinity  and establish  that it  achieves the
Cramér-Rao bound.  We also explore in a simulation setting the numerical behaviour of
asymptotic confidence regions for the parameter value.
\end{abstract}

{\it Key words} :  Asymptotic normality, Ballistic random walk, Confidence regions, Cramér-Rao efficiency, Maximum likelihood estimation, Random walk in random environment.
{\it MSC 2000} : Primary 62M05, 62F12; secondary 60J25.

\section{Introduction}
Random walks in random  environments (RWRE) are stochastic models that
allow two kinds of uncertainty  in physical systems: the first one is
due to the heterogeneity of the environment, and the second one to the
evolution of a particle in  a given environment.  The first studies of
one-dimensional RWRE were done by~\cite{Chernov} with a model of DNA 
replication, and   by~\cite{Temkin}   in   the  field   of
metallurgy.  From the latter work, the random media literature inherited some famous
terminology such as \textit{annealed} or \textit{quenched}
law. The  limiting behaviour of the  particle in \citeauthor{Temkin}'s
model  was successively  investigated by  \cite{Kozlov,Sol}
and~\cite{KKS}.  Since these  pioneer works on one-dimensional RWRE,
the related  literature in physics  and probability theory  has become
richer and  source of fine  probabilistic results that the  reader may
find in recent surveys including~\cite{Hughes} and~\cite{ZeitouniSF}.

 The  present   work  deals  with  the  one-dimensional   RWRE  where  we
 investigate a  different kind of question than  the limiting behaviour
 of the  walk. We adopt a statistical  point of view
 and are interested in inferring the distribution of the environment
 given the observation  of a long trajectory of  the random walk.  This
 kind of questions has already been studied in the context of random 
 walks in random colorings of $\Z$~\citep{BK,Matzinger,LM} 
as well as in the  context of RWRE for a characterization of the environment
  distribution~\citep{AdEn,Comets_etal}.  Whereas~\citeauthor{AdEn} deal
  with  very  general  RWRE  and   present  a  procedure  to  infer  the
  environment distribution through a system of moment equations,
  \citeauthor{Comets_etal} provide a maximum likelihood estimator (MLE)
  of the parameter of the environment distribution in the 
  specific  case   of  a   transient   ballistic  one-dimensional
  nearest neighbour path. In the latter work, the authors establish the consistency
  of their estimator and provide synthetic experiments to assess its 
  effective performance. It turns out that this 
  estimator   exhibits   a   much   smaller  variance   than   the   one
 of~\citeauthor{AdEn}.   We  propose to  establish  what the  numerical
 investigations  of~\citeauthor{Comets_etal}  suggested,  that is,  the
 asymptotic normality of the MLE  as  well as
 its asymptotic efficiency (namely, that it asymptotically achieves the Cramér-Rao bound).

This article is organised as follows.  In Section~\ref{sect:RWRE}, we
introduce the framework of the one dimensional  ballistic random walk
in       an      independent   and    identically    distributed  (i.i.d.)      parametric      environment.        In
Section~\ref{sect:M_Estimator}, we present the MLE procedure
developed  by~\citeauthor{Comets_etal} to infer  the parameter  of the
environment distribution. 
Section~\ref{sect:BP}  recalls  some   already  known  results  on  an
underlying branching  process in a  random environment related  to the
RWRE.  
Then, we state in Section~\ref{sect:res} our asymptotic normality result in the wake of
additional hypotheses required to prove it and listed in 
Section~\ref{sec:more_hyps}.   In Section~\ref{sect:ex},  we present
 three examples  of  environment distributions  which are  already
introduced  in~\cite{Comets_etal}, and  we check  that  the additional
required assumptions  of Section~\ref{sec:more_hyps} are  fulfilled, so
that the MLE is asymptotically normal and
efficient in these cases.  The proof of the asymptotic normality result is presented
in Section~\ref{sect:CLT}.  We apply to the score vector sequence a central
limit    theorem    for    centered   square-integrable    martingales
(Section~\ref{sec:CLT_dotphi})  and   we  adapt  to   our  context  an
asymptotic  normality result for  M-estimators (Section~\ref{sec:AN}).
To  conclude this  part, we  provide in  Section~\ref{sec:nonzero} the
proof of a sufficient condition for the non-degeneracy of the Fisher information.
Finally, Section~\ref{sect:simus} illustrates our results on synthetic
data by exploring empirical coverages of asymptotic confidence regions.

\section{Material and results} 


\subsection{Properties of a transient random walk in a random
  environment} \label{sect:RWRE} 
Let us introduce a one-dimensional random walk (more precisely a
nearest  neighbour path) evolving  in a  random environment  (RWRE for
short) and recall its elementary properties. 
We start by considering the environment defined through the collection
$\w=(\w_x)_{x\in\Z}\in   (0,1)^{\mathbb{Z}}$   of i.i.d.  random variables, 
with parametric  distribution $\nu=\nu_{\theta}$ that  depends on some
unknown parameter $\theta\in\Theta$. 
We further assume that $\Theta \subset \R^d$ is a compact set.  We let
$\P^{\theta}=\nu_{\theta}^{\otimes \Z}$ be the law on 
$(0,1)^{  \Z}$  of  the  environment  $\w$ and  $\E^{\theta}$  be  the
corresponding expectation. 

Now,  for  fixed environment  $\w$,  let  $X=(X_t)_{t\in\Z_+}$ be  the
Markov 
chain on  $\Z$ starting at  $X_0=0$ and with  (conditional) transition
probabilities 
\[
  P_{\w}(X_{t+1}=y|X_t=x)=\left \{\begin{array}{lr}
\w_x&\mbox{if}\ y=x+1,\\
1-\w_x&\mbox{if}\ y=x-1,\\
0&\mbox{otherwise}.
\end{array}\right.
\]
The \emph{quenched}  distribution $P_{\w}$ is  the conditional measure
on the  path space of  $X$ given $\w$.  Moreover,  the \emph{annealed}
distribution of $X$ is given by 
\[
  \PP^{\theta}(\cdot)=\int P_{\omega}(\cdot)\dd\P^{\theta}(\omega).
\]
 We  write  $E_{\w}$  and   $\EE^{\theta}$  for  the
corresponding quenched and annealed expectations, respectively.
In the following, we assume that the process $X$ is generated under the
true parameter value  $\ts$, an interior point of  the parameter space
$\Theta$, that we aim at  estimating. We shorten to 
$\PPs$   and   $\EEs$   (resp.   $\Ps$   and   $\Es$)  the   annealed
probability $\PP^{\ts}$ and its corresponding expectation~$\EE^{\ts}$ (resp. the law of the environment $\P^{\ts}$ and its corresponding expectation~$\E^\ts$) under
parameter value~$\ts$.

The behaviour of the process $X$ is related to the ratio sequence 
\begin{equation}
  \label{eq:rho}
  \rho_x=\frac{1-\w_x}{\w_x}, \qquad x \in {\mathbb Z}.
\end{equation}
We refer to~\cite{Sol} for the classification of $X$ between transient
or  recurrent cases  according  to whether  $\E^{\t}(\log \rho_0)$  is
different or  not from zero~\cite[the classification  is also recalled
in][]{Comets_etal}. In our setup,  we consider a transient process and
without loss of generality, assume that it is transient to the right,
thus corresponding to $\E^{\t}(\log \rho_0)<0$.  
The transient  case may  be further split  into two  sub-cases, called
\emph{ballistic} and \emph{sub-ballistic}  that correspond to a linear
and  a sub-linear speed  for the  walk, respectively.  More precisely,
letting $T_n$ 
be the first hitting time of the positive integer $n$, 
\begin{equation} \label{equa:HittingTime}
T_n = \inf \{ t \in \Z_+ \, : \, X_t = n \},
\end{equation}
and   assuming  $\E^{\theta}(\log\rho_0)<0$   all   through,  we   can
distinguish the following cases 
\begin{itemize}
\item[(a1)]  (Ballistic).  If   $\E^{\theta}(\rho_0)  <  1$,  then,  $
  \PP^{\theta} \mbox{-almost surely},$ 
  \begin{equation}
\label{eq:lln1}
\frac{T_n}{n}           \xrightarrow[n          \to          \infty]{}
\frac{1+\E^{\theta}(\rho_0)}{1-\E^{\theta}(\rho_0)}. 
\end{equation}
\item[(a2)]  (Sub-ballistic). If  $\E^{\theta}(\rho_0)  \geq 1$,  then
  $T_n / n \to + \infty$, 
$ \PP^{\theta} \mbox{-almost surely}$ when $n$ tends to infinity.
  \end{itemize}

Moreover, the  fluctuations of $T_n$  depend in nature on  a parameter
$\kappa \in (0, \infty]$, 
which is defined as the  unique positive solution of 
\[
\E^{\theta}(\rho_0^\kappa)=1
\]
when such a number exists, and $\kappa=+\infty$ otherwise. The ballistic case 
corresponds to $\kappa >1$.
Under mild additional assumptions,   \cite{KKS}  proved that 

 \begin{itemize}
\item[(aI)]  \label{item:aI}  if  $\kappa  \geq  2$,  then  $T_n$  has
  Gaussian fluctuations. Precisely, if $c$ denotes the limit in 
(\ref{eq:lln1}), then
$n^{-1/2}(T_n-nc)$ when $\kappa>2$, and $(n \log n)^{-1/2}(T_n-nc)$ when $\kappa=2$
have a non-degenerate Gaussian limit.
\item[(aII)] \label{item:aII} if $\kappa < 2$, then $n^{-1/\kappa}(T_n-d_n)$
has a  non-degenerate limit distribution,  which is a stable  law with
index $\kappa$. \\ 
The centering is 
$d_n=0$ for $\kappa <1$, $d_n=a n  \log n$ for $\kappa =1$, and $d_n=a
n$ for $\kappa \in (1,2)$, 
for some positive constant $a$.
  \end{itemize}


\subsection{A consistent estimator} \label{sect:M_Estimator}
We briefly recall the definition of the estimator proposed in~\cite{Comets_etal} to
infer the parameter $\theta$,  when we observe $X_{[0,T_n]}=(X_t \, :
\, t=0,1,\ldots, T_n)$, for some value $n \geq 1$.  It is defined as the
maximizer  of  some  well-chosen  criterion  function,  which  roughly
corresponds to the log-likelihood of the observations.  

We start by introducing the statistics $(L_x^n)_{x\in\Z}$, defined as 
\[
L_x^n := \sum_{s=0}^{T_n-1}\1_{\{X_s=x;\ X_{s+1}=x-1\}}, 
\]
namely  $L_x^n$   is  the  number   of  left  steps  of   the  process
$X_{[0,T_n]}$  from  site~$x$.   Here,  $\1_{\{\cdot\}}$  denotes  the
indicator function.

\begin{defi}
Let   $\phi_{\theta}$  be the  function  from $\Z_+^2$ to $\R$ given by 
\begin{equation} \label{eq:PhiTheta}
  \phi_{\theta}(x,y)=\log\int_0^1a^{x+1}(1-a)^{y}\dd\nu_{\theta}(a).
\end{equation}
The criterion function $\t \mapsto\ell_n(\t)$ is defined as 
\begin{equation}\label{eq:l}
\ell_n(\theta)=\sum_{x=0}^{n-1}  \phi_{\theta}(L_{x+1}^n,L_{x}^n). 
\end{equation}
\end{defi}
We  now recall the  assumptions stated  in~\cite{Comets_etal} ensuring
that the maximizer of criterion  $\ell_n$ is a consistent estimator of
the unknown parameter.  
\begin{assu}(Consistency conditions). \label{assu:consistance}
  \begin{itemize}
  \item [i)] \label{assu:transience} (Transience to the right).  
For any $ \theta\in\Theta, \E^{\theta}|\log\rho_0|<\infty$
and~$\E^{\theta}(\log\rho_0)<0$.  
\item[ii)] \label{assu:ballistic} (Ballistic case). 
For any $ \theta\in\Theta, \   \E^{\t}(\rho_0)<1$.
\item[iii)]\label{assu:cont} (Continuity). 
For  any $(x,y)\in  \Z_+^2$, the  map $\t  \mapsto
\phi_{\theta}(x,y)$ is continuous on the parameter set $\Theta$.
 \item[iv)] \label{assu:ident} (Identifiability). For any 
 $ (\theta,\theta')\in\Theta^2,$ 
 $\nu_{\theta}\neq\nu_{\theta'}\iff \theta\neq\theta'.$
\item [v)] \label{assu:BoundedBelow}
The  collection of  probability measures  $\{ \nu_\t  \, :  \,  \t \in
\Theta \}$ is such that 
\[
   \inf_{\theta \in \Theta} \E^\t [\log (1- \w_0)] > -\infty.
\]
  \end{itemize}
\end{assu}
According to Assumption~\ref{assu:consistance} 
point~\hyperref[assu:cont]{$iii)$},    the   function   $\theta\mapsto
\ell_n(\theta)$ is  continuous on the compact  parameter set $\Theta$.
Thus, it achieves its maximum, and the estimator $\htet$ is defined as
one maximizer of this criterion.  
\begin{defi}
An estimator $ \htet$ of $\theta$ is defined as a measurable choice
\begin{equation}\label{eq:estimator}
\htet \in \argmax_{\theta\in\Theta}\ell_n(\theta).
\end{equation}
\end{defi}
Note   that  $\htet$   is   not  necessarily   unique.  As   explained
in~\cite{Comets_etal}, with a slight abuse of notation, $\htet$ may be
considered as MLE.  Moreover, under
Assumption~\ref{assu:consistance},  \cite{Comets_etal}  establish  its
consistency, namely its convergence  in $\PPs$-probability to the true
parameter value $\ts$.

\subsection{The role of an underlying branching process}\label{sect:BP}
We  introduce in  this section  an underlying  branching  process with
immigration in random environment (BPIRE) that is naturally related to the RWRE.
Indeed, it is well-known that for an i.i.d. environment, 
under the annealed law $\PPs$, the sequence 
$L_n^n,L_{n-1}^n,\dots,  L^n_0$ has  the same  distribution as  a BPIRE
denoted $Z_0,\dots, Z_n,$ and defined by 
\begin{equation} \label{eq:BPRE}
Z_0=0, \quad \mbox{and for } k=0,\dots,n-1,\quad
Z_{k+1}=\sum_{i=0}^{Z_k}\xi'_{k+1,i},
\end{equation}
with $(\xi'_{k,i})_{k\in\N;i\in\Z_+}$ independent and
\[
  \forall m\in\Z_+, \quad P_{\w}(\xi'_{k,i}=m)=(1-\w_{k})^m\w_{k} ,
\]
\cite[see for instance][]{KKS,Comets_etal}.  
Let   us   introduce   through   the  function   $\phi_{\t}$   defined
by~\eqref{eq:PhiTheta} the transition kernel~$Q_\t$ on $\Z_+^2$ defined as 
\begin{equation}
  \label{eq:Qtransition}
Q_{\t}(x,y)=\binom{x+y}{x}\ee^{\phi_\t(x,y)}=  \binom{x+y}{x} \int_0^1
a^{x+1}(1-a)^y \dd\nu_{\t}(a). 
\end{equation}
Then for  each value  $\t \in \Theta$,  under annealed law  $\PPt$ the
BPIRE  $(Z_n)_{n\in\Z_+}$  is   an  irreducible  positive  recurrent
homogeneous Markov chain with transition
kernel~$Q_{\t}$ and a unique stationary probability distribution denoted
by $\pi_\t$.  Moreover, the moments of $\pi_\t$ may be characterised
through the distribution of the ratios $(\rho_x)_{x\in \Z}$. The following statement is a direct    consequence    from     the    proof    of    Theorem    4.5
in~\cite{Comets_etal} (see Equation~(16) in this proof).
\begin{prop}\cite[Theorem 4.5 in][]{Comets_etal}.
\label{prop:moment_pi}
  The invariant probability measure $\pi_\t$ is positive on $\Z_+$ and satisfies 
\[
\forall j\ge 0, \quad \sum_{k\ge j+1} k(k-1)\dots(k-j) \pi_\t(k) = (j+1)! \ \Et
\Big[\Big(\sum_{n\ge 1} \prod_{k=1}^n \rho_k\Big) ^{j+1}\Big].
\]
In particular, $\pi_\t$ has a finite first moment in the ballistic case.
\end{prop}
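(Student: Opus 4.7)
The plan is to identify both sides of the claimed identity as solutions of the \emph{same} linear recursion in $j$ and conclude by induction, starting from the trivial case $\mu_0 = 1 = 0!\,\E^\theta[R^0]$, where I set $R := \sum_{n \geq 1} \prod_{k=1}^n \rho_k$ and $\mu_j := \sum_{k \geq j} k(k-1)\cdots(k-j+1)\,\pi_\theta(k)$ for the $j$-th descending factorial moment of $\pi_\theta$.

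Positivity of $\pi_\theta$ on $\Z_+$ is immediate from~\eqref{eq:Qtransition}: since $\nu_\theta$ is supported in $(0,1)$, the integrand $a^{x+1}(1-a)^y$ is $\nu_\theta$-almost surely strictly positive, hence $Q_\theta(x,y) > 0$ for every $(x,y) \in \Z_+^2$. The chain $(Z_n)$ is therefore irreducible, so its unique stationary distribution must charge every state.

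For the moment formula I exploit the branching structure~\eqref{eq:BPRE}: conditionally on $(Z_k, \omega_{k+1})$, the variable $Z_{k+1}$ is a sum of $Z_k + 1$ i.i.d.\ geometric random variables with parameter $\omega_{k+1}$, hence it follows a negative binomial law. Differentiating the probability generating function $\psi_{\omega_{k+1}}(s)^{Z_k+1}$, with $\psi_\omega(s) = \omega/(1-(1-\omega)s)$, at $s=1$ produces the clean identity
\[
\EE^\theta\bigl[Z_{k+1}(Z_{k+1}-1)\cdots(Z_{k+1}-j) \mid Z_k, \omega_{k+1}\bigr] = \rho_{k+1}^{j+1}\,(Z_k+1)(Z_k+2)\cdots(Z_k+j+1).
\]
Under stationarity $Z_k \sim \pi_\theta$ and $\omega_{k+1}$ is independent of $Z_k$. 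Rewriting the rising factorial in the basis of descending ones via Vandermonde's identity $\binom{Z+j+1}{j+1} = \sum_{i=0}^{j+1}\binom{Z}{i}\binom{j+1}{i}$, taking annealed expectations and isolating the top term then yields the recursion
\[
\bigl(1 - \E^\theta[\rho_1^{j+1}]\bigr)\,\mu_{j+1} = \E^\theta[\rho_1^{j+1}] \sum_{i=0}^{j} \binom{j+1}{i}\,\frac{(j+1)!}{i!}\,\mu_i.
\]

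A parallel recursion holds for $R$. Since the shifted sequence $(\rho_{k+1})_{k \geq 1}$ is an i.i.d.\ copy of $(\rho_k)_{k\geq 1}$, one has the distributional identity $R \stackrel{d}{=} \rho_1(1+R')$ with $R' \stackrel{d}{=} R$ independent of $\rho_1$. Raising to the $(j+1)$-th power, applying the binomial theorem and using independence gives
\[
\bigl(1 - \E^\theta[\rho_1^{j+1}]\bigr)\,\E^\theta[R^{j+1}] = \E^\theta[\rho_1^{j+1}] \sum_{i=0}^{j} \binom{j+1}{i}\,\E^\theta[R^i],
\]
which is exactly the previous recursion under the correspondence $\mu_i \leftrightarrow i!\,\E^\theta[R^i]$. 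Induction on $j$ therefore delivers $\mu_{j+1} = (j+1)!\,\E^\theta[R^{j+1}]$ for every $j \geq 0$. Specialising $j=0$ gives $\mu_1 = \E^\theta[\rho_0]/(1-\E^\theta[\rho_0])$, which is finite precisely when $\E^\theta[\rho_0]<1$, i.e.\ in the ballistic case. The main obstacle is the algebraic bookkeeping in the Vandermonde step needed to match the two recursions term by term; once this is set up the induction is essentially formal.
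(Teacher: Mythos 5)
Your strategy is attractive and most of the algebra is correct: the negative binomial factorial--moment identity, the Vandermonde expansion, the distributional identity $R\stackrel{d}{=}\rho_1(1+R')$ and the term-by-term matching of the two recursions all check out, as does the positivity argument. Note also that the paper does not prove this proposition itself; it imports it from Equation~(16) in the proof of Theorem~4.5 of Comets \emph{et al.}, where the identity is extracted from the quenched generating function of the stationary law, so your route is genuinely different from the cited source.

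There is, however, a real gap in the induction. What stationarity and Tonelli actually give is an identity in $[0,+\infty]$,
\[
\mu_{j+1}\;=\;\Et[\rho_1^{j+1}]\Big(\mu_{j+1}+\sum_{i=0}^{j}\binom{j+1}{i}\frac{(j+1)!}{i!}\,\mu_i\Big),
\]
and your step of ``isolating the top term'' subtracts $\Et[\rho_1^{j+1}]\,\mu_{j+1}$ from both sides, an $\infty-\infty$ operation unless you already know $\mu_{j+1}<\infty$. The displayed equation is satisfied by $\mu_{j+1}=+\infty$ whatever the lower moments are, so the recursion does \emph{not} determine $\mu_{j+1}$, and the induction cannot close: already at $j=0$ it fails to exclude $\sum_k k\,\pi_\t(k)=+\infty$, which is precisely the ``finite first moment in the ballistic case'' conclusion that the paper uses. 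For $R$ the analogous finiteness is easy (Minkowski, as in the proof of Proposition~\ref{prop:2pts3paraNormal}), but for $\pi_\t$ it is exactly the point at issue and must be supplied. A standard repair: run the same factorial-moment recursion along the chain started from $Z_0=0$, where all moments are finite and the affine map $x\mapsto \Et[\rho_1^{j+1}](x+S)$ is a contraction when $\Et[\rho_1^{j+1}]<1$, so that $\sup_n \EEt\big[Z_n(Z_n-1)\cdots(Z_n-j)\big]<\infty$; then use the convergence in distribution of $Z_n$ to $\pi_\t$ (the chain is aperiodic since $Q_\t(0,0)>0$) together with Fatou's lemma to transfer this bound to $\pi_\t$. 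With finiteness in hand your computation does identify $\mu_{j+1}$ with $(j+1)!\,\Et[R^{j+1}]$; you should also record the degenerate case $\Et[\rho_1^{j+1}]\ge 1$, in which both sides of the claimed identity are $+\infty$.
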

Note that the criterion $\ell_n$ satisfies the following property 
\begin{equation}\label{eq:logvraisannealed}
\ell_n(\theta)
\sim \sum_{k=0}^{n-1}\phi_{\theta}(Z_k,Z_{k+1}) \mbox{ under } \PPs,
\end{equation}
where $\sim $  means equality in distribution. For  each value $\t \in
\Theta$,     under     annealed     law     $\PPt$     the     process
$((Z_n,Z_{n+1}))_{n\in\Z_+}$ is also an irreducible positive 
recurrent homogeneous Markov  chain with unique stationary probability
distribution denoted by $\tilde \pi_\t$ and defined as
\begin{equation}\label{eq:pi_tilde}
\tilde  \pi_\t (x,y) =  \pi_\t(x) Q_\t(x,y),  \quad \forall  (x,y) \in
\Z_+^2. 
\end{equation}
For any function  $g:\Z_+^2 \to \R$ such that  $ \sum_{x,y}\tilde \pi_\t
(x,y) |g(x,y)| < \infty$, we  denote by $\tilde \pi_{\t}(g)$ the quantity
\begin{equation}\label{eq:pi_tilde_g}
\tilde \pi_\t(g) = \sum_{(x,y)\in \Z_+^2}\tilde \pi_\t (x,y) g(x,y).
\end{equation}
We extend the notation  above for any function $g=(g_1,\dots,g_d):\Z_+^2
\to \R^d$ such that $ \tilde \pi_\t( \|g\|) <\infty$, where $\| \cdot
\|$ is the uniform norm, and denote by $\tilde \pi_\t(g)$ the
vector  $(\tilde  \pi_\t(g_1),\dots,\tilde  \pi_\t(g_d))$.  The  following
ergodic theorem is valid.

\begin{prop}\cite[Theorem 4.2 in Chapter 4 from][]{Revuz}. 
\label{prop:lln}
Under               point~\hyperref[assu:transience]{$i)$}        in
Assumption~\ref{assu:consistance}, for any function $g : \Z_+^2\to 
\R^d$, such that $ \tilde \pi_\t( \|g\|) <\infty$ the following ergodic
theorem holds 
\[
  \lim_{n\to\infty}\frac 1 n \sum_{k=0}^{n-1} g(Z_k,Z_{k+1})
= \tilde \pi_\t( g) ,
\]
$\PPt$-almost surely and in $ \mathbb{L}^1(\PPt)$. 
\end{prop}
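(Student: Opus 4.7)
The idea is to reduce the claim to the standard ergodic theorem for positive recurrent irreducible homogeneous Markov chains, applied not to $(Z_k)$ itself but to the \emph{pair chain} $W_k := (Z_k,Z_{k+1})$, $k \in \Z_+$, because the summand $g(Z_k,Z_{k+1})$ is then a function of the single state $W_k$.

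First, I would check that $(W_k)_{k\ge 0}$ is itself a time-homogeneous Markov chain on $\Z_+^2$: conditionally on $W_k=(x,y)$, the next state has the form $W_{k+1}=(y,Z_{k+2})$ with $Z_{k+2}$ drawn from $Q_\t(y,\cdot)$, so the transition kernel of $W$ is $\tilde Q_\t\bigl((x,y),(y',z)\bigr)=\1_{\{y'=y\}}Q_\t(y,z)$. Second, I would lift the irreducibility and positive recurrence of $(Z_k)$, granted in the paragraph preceding the statement, to the pair chain on its state space $\mathcal{S}=\{(x,y)\in\Z_+^2 : Q_\t(x,y)>0\}$: for $(x,y),(x',y')\in\mathcal{S}$ one chains together a path from $x$ to $x'$ and from $x'$ to $y'$ in $(Z_k)$, obtaining a path in $(W_k)$ of positive probability. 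Third, a direct computation using the invariance of $\pi_\t$ under $Q_\t$ shows that $\tilde\pi_\t$ defined in \eqref{eq:pi_tilde} is a stationary probability for $\tilde Q_\t$, which is therefore the unique invariant law of the pair chain.

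Once this setup is in place, I would invoke Revuz's Theorem 4.2, Chapter 4, in the scalar form: for every $\tilde\pi_\t$-integrable real valued function $h$,
\[
\frac1n\sum_{k=0}^{n-1}h(W_k)\xrightarrow[n\to\infty]{}\tilde\pi_\t(h),\qquad\text{$\PPt$-a.s.\ and in $\mathbb{L}^1(\PPt)$,}
\]
regardless of the (deterministic) initial condition $Z_0=0$ — this is where positive recurrence and irreducibility are essential. The vector-valued statement follows by applying the scalar result to each coordinate $g_i$ of $g$, noting that $\tilde\pi_\t(|g_i|)\le \tilde\pi_\t(\|g\|)<\infty$ by assumption.

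The main technical point is the $\mathbb{L}^1$ convergence, since the almost sure convergence is the more familiar statement. In Revuz's framework, the $\mathbb{L}^1$ convergence is obtained from the a.s.\ convergence together with uniform integrability derived from the Chacon–Ornstein ratio ergodic theorem (or, equivalently, from a Scheffé-type argument applied to the $\tilde\pi_\t$-integrability of $\|g\|$). Since $\tilde\pi_\t(\|g\|)<\infty$ is exactly the hypothesis of the proposition, no further integrability check is needed.
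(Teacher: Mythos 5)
Your proposal is correct and takes essentially the same route as the paper: the paragraph preceding the statement already introduces the pair chain $((Z_n,Z_{n+1}))_{n\in\Z_+}$ as an irreducible positive recurrent Markov chain with invariant law $\tilde\pi_\t$ given by~\eqref{eq:pi_tilde}, and the proposition is then obtained exactly as you describe, by citing Revuz's ergodic theorem for that chain coordinatewise. The only micro-slip is in your irreducibility step: to join $(x,y)$ to $(x',y')$ in the pair chain one needs a path in $(Z_k)$ from $y$ (not from $x$) to $x'$ followed by the one-step transition $x'\to y'$, which is harmless since $Q_\t(x,y)>0$ for all $(x,y)\in\Z_+^2$ here.
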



\subsection{Assumptions for asymptotic normality}\label{sec:more_hyps}
Assumption~\ref{assu:consistance} is  required  for  the  construction of  a
consistent estimator of the parameter $\theta$. It mainly consists in
a transient random walk with linear speed (ballistic regime) plus some
regularity    assumptions    on   the    model    with   respect    to
$\theta\in\Theta$. Now, asymptotic  normality result for this estimator
requires additional hypotheses.  

In the following,  for any function $g_\t$ depending  on the parameter
$\t$, the symbols $\dot  g_\t$ or $\partial_\t g_\t$ and $\ddot{g}_\t$
or  $\partial_\t^2  g_\t$  denote  the (column)  gradient  vector  and
Hessian   matrix  with  respect   to  $\t$,   respectively.  Moreover,
$Y^\intercal$  is the row  vector obtained  by transposing  the column
vector $Y$.

\begin{assu} (Differentiability). \label{assu:diff}
The collection of probability measures $\{ \nu_\t \, : \, \t \in\Theta
\}$ is such that for any $(x,y)\in \Z_+^2$, the map $\t \mapsto 
\phi_{\theta}(x,y)$ is twice continuously differentiable on $\Theta$.
\end{assu}

\begin{assu} (Regularity conditions).\label{assu:smoothness}
For any $\t \in \Theta$, there exists some $q>1$ such that
\begin{equation}\label{eq:dotphi_stat_integr}
\tilde \pi_\t\Big(\|\dot \phi_{\t}\|^{2q}\Big)<+\infty.
\end{equation}
For any $x\in\Z_+,$
\begin{equation}\label{eq:chaing_dot}
\quad \sum_{y\in \Z_+} \dot Q_{\t}(x,y)=\partial_\t \sum_{y\in \Z_+}  Q_{\t}(x,y)=0.
\end{equation}
\end{assu}

\begin{assu} (Uniform conditions).\label{assu:uniform}
For   any   $\t   \in   \Theta$,  there   exists   some   neighborhood
$\mathcal{V}(\t)$ of $\t$ such that 
\begin{equation} \label{eq:phi_dot_square_control} 
 \tilde   \pi_\t  \Big(\sup_{\t'   \in   \mathcal{V}(\t)}  \|\dot
 \phi_{\t'} \|^2 \Big) < +\infty
\quad \mbox{and} \quad 
  \tilde  \pi_\t \Big( \sup_{\t' \in 
   \mathcal{V}(\t)} \|\ddot \phi_{\t'} \|\Big) < +\infty.
\end{equation}
\end{assu}

Assumptions~\ref{assu:diff} and~\ref{assu:smoothness} are technical and
involved  in  the proof  of  a central  limit  theorem  (CLT) for  the
gradient vector of the criterion~$\ell_n$, also called score
vector sequence. Assumption~\ref{assu:uniform} is also technical and involved
in  the proof  of asymptotic  normality  of $\widehat  \t_n$ from  the
latter CLT. Note that Assumption~\ref{assu:smoothness} also allows us to define the
matrix 
\begin{equation}
  \label{eq:Sigma}
   \Sigma_{\t} 
=   \tilde   \pi_{\t}  \Big(\dot   \phi_{\t}^{\phantom{\intercal}}\dot
\phi_{\t}^\intercal \Big). 
\end{equation}
Combining                                                           the
definitions~\eqref{eq:Qtransition},\eqref{eq:pi_tilde},\eqref{eq:pi_tilde_g}
and~\eqref{eq:Sigma} with Assumption~\ref{assu:smoothness}, we obtain the equivalent expression for $\Sigma_{\t}$
\begin{align}
\Sigma_{\t} = & \sum_{x \in\Z_+}\sum_{y\in \Z_+} \pi_{\t}(x) \frac 1 { Q_\t(x,y)} \dot{Q}_\t
(x,y) \dot Q_\t (x,y)^\intercal \nonumber \\
= &-\sum_{x \in\Z_+}\sum_{y\in \Z_+} \pi_{\t}(x)\left ( \ddot{Q}_\t(x,y) - \frac 1 { Q_\t(x,y)} \dot{Q}_\t
(x,y) \dot Q_\t (x,y)^\intercal\right ) \nonumber \\
= &   - \tilde \pi_\t (\ddot \phi_{\t}) .\label{eq:Sigma_v2}
 \end{align}
\begin{assu}(Fisher information matrix).\label{assu:nonsingular}
  For  any value  $\t \in  \Theta$,  the matrix  $\Sigma_{\t}$ is  non
  singular.  
\end{assu}
Assumption~\ref{assu:nonsingular} 
states invertibility of  the Fisher information matrix $\Sigma_{\ts}$.
This assumption is necessary to prove asymptotic 
normality of $\widehat \t_n$ from  the previously mentioned CLT on the
score vector sequence.


\subsection{Results} \label{sect:res}

\begin{theo}\label{thm:CLT_dotphi}
Under Assumptions~\ref{assu:consistance} to~\ref{assu:smoothness}, the 
score vector sequence $ \dot \ell_n(\ts)/\sqrt{n}$ is asymptotically
normal with mean zero and finite covariance matrix $ \Sigma_{\ts}$.
\end{theo}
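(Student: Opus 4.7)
My plan is to exploit the BPIRE representation from Section~\ref{sect:BP} to rewrite the score as a sum of martingale differences, then apply a standard multivariate martingale CLT whose hypotheses reduce, via Proposition~\ref{prop:lln}, to the integrability bounds in Assumption~\ref{assu:smoothness}.

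Since the statistics $L_x^n$ do not depend on $\theta$, $\dot\ell_n(\ts)=\sum_{x=0}^{n-1}\dot\phi_{\ts}(L^n_{x+1},L^n_x)$. The distributional identity \eqref{eq:logvraisannealed} holds jointly in the underlying statistics, so under $\PPs$ the score vector has the same law as $S_n:=\sum_{k=0}^{n-1}\xi_k$ with $\xi_k:=\dot\phi_{\ts}(Z_k,Z_{k+1})$, where $(Z_k)_{k\ge 0}$ is the BPIRE of \eqref{eq:BPRE} with transition $Q_{\ts}$. It therefore suffices to show $S_n/\sqrt n\to\mathcal N(0,\Sigma_{\ts})$ in distribution under $\PPs$. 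The first key observation is that $(\xi_k)$ is a square-integrable martingale difference sequence for the filtration $\mathcal F_k:=\sigma(Z_0,\dots,Z_k)$: from \eqref{eq:Qtransition} one has $\dot Q_\theta=Q_\theta\dot\phi_\theta$, so the Markov property of $(Z_k)$ together with Assumption~\ref{assu:smoothness} equation~\eqref{eq:chaing_dot} yields
\[
\EEs[\xi_k\mid\mathcal F_k]=\sum_{y\in\Z_+}Q_{\ts}(Z_k,y)\dot\phi_{\ts}(Z_k,y)=\sum_{y\in\Z_+}\dot Q_{\ts}(Z_k,y)=0.
\]

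Next, I apply the Cramér-Wold device together with a classical CLT for centered square-integrable martingales (Hall-Heyde type). Fix $u\in\R^d$ and set $\eta_k:=u^\intercal\xi_k$. The convergence of the quadratic variation follows from Proposition~\ref{prop:lln} applied to $(x,y)\mapsto (u^\intercal\dot\phi_{\ts}(x,y))^2$, whose $\tilde\pi_{\ts}$-integrability is granted by \eqref{eq:dotphi_stat_integr} (as $2q>2$):
\[
\frac{1}{n}\sum_{k=0}^{n-1}\eta_k^2\xrightarrow[n\to\infty]{}u^\intercal\Sigma_{\ts}u\quad \PPs\text{-a.s.}
\]
The Lindeberg condition is then obtained through the Lyapunov bound
\[
\frac{1}{n^q}\sum_{k=0}^{n-1}|\eta_k|^{2q}\le \|u\|^{2q}\,n^{1-q}\cdot\frac{1}{n}\sum_{k=0}^{n-1}\|\xi_k\|^{2q}\xrightarrow[n\to\infty]{}0,
\]
where $q>1$ and $n^{-1}\sum\|\xi_k\|^{2q}$ converges a.s. to the finite limit $\tilde\pi_{\ts}(\|\dot\phi_{\ts}\|^{2q})$ by Proposition~\ref{prop:lln} and \eqref{eq:dotphi_stat_integr}.

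The genuine content of the proof lies in the martingale identity, which is nothing but the vanishing of the score of the BPIRE transition kernel and depends crucially on the derivative-summation interchange in \eqref{eq:chaing_dot}. Once this is in place, the rest is a routine verification; the only minor care is that $(Z_k)$ starts at $Z_0=0$ rather than at $\pi_{\ts}$, which is harmless since Proposition~\ref{prop:lln} already accommodates arbitrary deterministic initial conditions.
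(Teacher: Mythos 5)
Your proposal is correct and follows essentially the same route as the paper: the same reduction via \eqref{eq:logvraisannealed} to the BPIRE, the same martingale-difference identity from \eqref{eq:chaing_dot}, and the same combination of the Cram\'er--Wold device with a Hall--Heyde martingale CLT, with all hypotheses checked through Proposition~\ref{prop:lln} and the $2q$-moment bound \eqref{eq:dotphi_stat_integr}. The only cosmetic difference is that you verify a Lyapunov/Lindeberg condition where the paper checks the maximum conditions of Hall--Heyde's Theorem~3.2 (via a Markov inequality with the same exponent $2q$ and the $\mathbb{L}^1$ ergodic theorem), which amounts to the same computation.
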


\begin{theo} (Asymptotic normality). \label{theo:CLT}
Under   Assumptions~\ref{assu:consistance}  to~\ref{assu:nonsingular},
for any  choice of $\widehat\theta_n$ satisfying~\eqref{eq:estimator},
the  sequence  $\{\sqrt{n}  (\htet-\ts)\}_{n  \in  \Z_+}$  converges  in
$\PPs$-distribution  to   a  centered  Gaussian   random  vector  with
covariance  matrix  $   \Sigma_{\ts}  ^{-1}$.   
\end{theo}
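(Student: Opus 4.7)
The plan is the classical M-estimator Taylor expansion: rewrite the first-order condition $\dot\ell_n(\htet)=0$ as a linear relation between $\sqrt{n}(\htet-\ts)$, the score $n^{-1/2}\dot\ell_n(\ts)$ and a Hessian term, then invoke Theorem~\ref{thm:CLT_dotphi} for the score and Proposition~\ref{prop:lln} plus Assumption~\ref{assu:uniform} for the Hessian. Under Assumption~\ref{assu:diff}, the map $\t\mapsto\ell_n(\t)$ is of class $C^2$ on $\Theta$. Since $\ts$ is interior and $\htet\to\ts$ in $\PPs$-probability by the consistency result of~\cite{Comets_etal}, with $\PPs$-probability tending to one $\htet$ lies in the interior of $\Theta$ and satisfies $\dot\ell_n(\htet)=0$. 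A coordinatewise mean value expansion of each component of $\dot\ell_n$ around $\ts$ yields random points $\bar\t_n^{(1)},\dots,\bar\t_n^{(d)}$ on the segment between $\ts$ and $\htet$ and a random matrix $H_n$ whose $i$-th row equals the $i$-th row of $\ddot\ell_n(\bar\t_n^{(i)})$, such that
\begin{equation*}
0 \;=\; \dot\ell_n(\htet) \;=\; \dot\ell_n(\ts) \,+\, H_n(\htet-\ts),
\end{equation*}
with each $\bar\t_n^{(i)}$ converging to $\ts$ in $\PPs$-probability.

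The next task is to prove $n^{-1}H_n\to-\Sigma_{\ts}$ in $\PPs$-probability. Thanks to the distributional identity~\eqref{eq:logvraisannealed} applied to second derivatives, it is enough to analyse $n^{-1}\sum_{k=0}^{n-1}\ddot\phi_{\t}(Z_k,Z_{k+1})$ under $\PPt$. At $\t=\ts$, Proposition~\ref{prop:lln} (whose integrability hypothesis is covered by Assumption~\ref{assu:uniform}) yields
\begin{equation*}
\frac{1}{n}\sum_{k=0}^{n-1}\ddot\phi_{\ts}(Z_k,Z_{k+1}) \;\xrightarrow[n\to\infty]{}\; \tilde\pi_{\ts}\bigl(\ddot\phi_{\ts}\bigr) \;=\; -\Sigma_{\ts},
\end{equation*}
the last equality coming from~\eqref{eq:Sigma_v2}. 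To move the argument from $\ts$ to $\bar\t_n^{(i)}$, I would split
\begin{equation*}
\frac{1}{n}\sum_{k=0}^{n-1}\ddot\phi_{\bar\t_n^{(i)}}(Z_k,Z_{k+1}) \;=\; \frac{1}{n}\sum_{k=0}^{n-1}\ddot\phi_{\ts}(Z_k,Z_{k+1}) \,+\, \frac{1}{n}\sum_{k=0}^{n-1}\bigl[\ddot\phi_{\bar\t_n^{(i)}} - \ddot\phi_{\ts}\bigr](Z_k,Z_{k+1}),
\end{equation*}
and show that the remainder is $o_{\PPs}(1)$. Pointwise continuity of $\t\mapsto\ddot\phi_\t(x,y)$ from Assumption~\ref{assu:diff}, together with the uniform $\tilde\pi_{\ts}$-domination in Assumption~\ref{assu:uniform} on a neighbourhood $\mathcal V(\ts)$ and the ergodic theorem, give a uniform-on-$\mathcal V(\ts)$ convergence of the empirical means by a standard bracketing/compactness argument; since $\bar\t_n^{(i)}\to\ts$ in probability, the remainder vanishes.

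Finally, Assumption~\ref{assu:nonsingular} ensures that $\Sigma_{\ts}$ is invertible, hence $n^{-1}H_n$ is invertible with $\PPs$-probability tending to one and its inverse converges to $-\Sigma_{\ts}^{-1}$. Rearranging the expansion yields
\begin{equation*}
\sqrt{n}(\htet-\ts) \;=\; -\bigl(n^{-1}H_n\bigr)^{-1}\,\frac{1}{\sqrt{n}}\,\dot\ell_n(\ts),
\end{equation*}
and Theorem~\ref{thm:CLT_dotphi} combined with Slutsky's lemma delivers the claimed centered Gaussian limit with covariance $\Sigma_{\ts}^{-1}$. The main technical hurdle is the Hessian step: promoting the one-point ergodic theorem to convergence at the \emph{random} intermediate points $\bar\t_n^{(i)}$ relies crucially on Assumption~\ref{assu:uniform}, which is designed precisely to allow an interchange of the $\t$-limit with the long-run empirical average, and is where the care in the argument is concentrated.
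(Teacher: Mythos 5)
Your proof is correct and follows essentially the same route as the paper: a first-order condition and Taylor (mean value) expansion around $\ts$, the CLT for the score from Theorem~\ref{thm:CLT_dotphi}, a uniform law of large numbers for the Hessian built on Assumption~\ref{assu:uniform} and the ergodic theorem, invertibility from Assumption~\ref{assu:nonsingular}, and Slutsky's lemma. Your coordinatewise mean value expansion and the remark that $\htet$ is eventually interior are in fact slightly more careful than the paper's single intermediate point $\tilde\t_n$, but the substance is identical.
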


Note  that  the limiting  covariance  matrix  of  $\sqrt{n} \htet$  is
exactly the inverse Fisher information matrix of the model.  As such, our
estimator is  efficient. Moreover, the previous theorem may
be used to build asymptotic confidence regions for~$\theta$, as
illustrated in Section~\ref{sect:simus}. Proposition~\ref{prop:sigma_approx} below explains how to estimate the Fisher information matrix $\Sigma_\ts$. Indeed, $\Sigma_\ts$ is defined via the invariant distribution $\tilde \pi_\ts$ which possesses no analytical expression. To bypass the problem, we rely on the \emph{observed Fisher information matrix} as an estimator of  $\Sigma_\ts$.

\begin{prop} \label{prop:sigma_approx}
Under Assumptions~\ref{assu:consistance} to~\ref{assu:nonsingular}, the observed information matrix 
\begin{equation} \label{equa:Sigma_hat}
  \hat    \Sigma_{n}    = -   \frac    1    n    \sum_{x=0}^{n-1}    \ddot
  \phi_{\htet}^{\phantom{\intercal}} (L_{x+1}^n,L_x^n)
\end{equation}
converges in $\PPs$-probability to $\Sigma_\ts$.
\end{prop}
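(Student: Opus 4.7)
The plan is to decompose $\hat\Sigma_n - \Sigma_{\ts}$ into a contribution coming from evaluating the second derivatives at $\htet$ rather than $\ts$, and a contribution coming from replacing the time average by its stationary expectation. Writing this out, I would consider
\[
\hat\Sigma_n - \Sigma_{\ts}
= \Bigl[\hat\Sigma_n + \tfrac{1}{n}\sum_{x=0}^{n-1}\ddot\phi_{\ts}(L_{x+1}^n,L_x^n)\Bigr]
+ \Bigl[-\tfrac{1}{n}\sum_{x=0}^{n-1}\ddot\phi_{\ts}(L_{x+1}^n,L_x^n) - \Sigma_{\ts}\Bigr],
\]
and argue that each bracket converges to zero in $\PPs$-probability.

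For the second bracket, I would invoke the distributional equivalence~\eqref{eq:logvraisannealed}, which transfers the sum indexed by the local times into a sum $\sum_{k=0}^{n-1}\ddot\phi_{\ts}(Z_k,Z_{k+1})$ along the BPIRE $(Z_k)$. Assumption~\ref{assu:uniform} provides $\tilde\pi_{\ts}(\|\ddot\phi_{\ts}\|)<\infty$, so Proposition~\ref{prop:lln} applied coordinatewise yields almost sure convergence of this normalized sum to $\tilde\pi_{\ts}(\ddot\phi_{\ts})=-\Sigma_{\ts}$, by identity~\eqref{eq:Sigma_v2}. This settles the second bracket.

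For the first bracket, the difficulty is that $\htet$ is random, so I cannot simply use pointwise ergodic convergence. The standard remedy is a locally uniform law of large numbers: I would show that
\[
\sup_{\t\in\mathcal{V}(\ts)}\Bigl|\tfrac{1}{n}\sum_{x=0}^{n-1}\ddot\phi_{\t}(L_{x+1}^n,L_x^n) - \tilde\pi_{\ts}(\ddot\phi_{\t})\Bigr|
\xrightarrow[n\to\infty]{\PPs} 0,
\]
where $\mathcal{V}(\ts)$ is the neighborhood from Assumption~\ref{assu:uniform}. This follows by combining the pointwise ergodic theorem on a dense countable subset of $\mathcal{V}(\ts)$ with an equicontinuity argument: Assumption~\ref{assu:diff} gives continuity of $\t\mapsto\ddot\phi_{\t}(x,y)$, and the integrable envelope $\sup_{\t\in\mathcal{V}(\ts)}\|\ddot\phi_{\t}\|$ from~\eqref{eq:phi_dot_square_control} allows dominated convergence to bound the oscillation modulus uniformly in $n$. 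The same envelope shows that $\t\mapsto\tilde\pi_{\ts}(\ddot\phi_{\t})$ is continuous at $\ts$. By consistency of $\htet$ (established in~\cite{Comets_etal} under Assumption~\ref{assu:consistance}), $\htet\in\mathcal{V}(\ts)$ with probability tending to one, and therefore the first bracket is bounded in probability by the supremum above plus $\|\tilde\pi_{\ts}(\ddot\phi_{\htet}) - \tilde\pi_{\ts}(\ddot\phi_{\ts})\|$, both of which tend to zero.

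The main obstacle is proving the locally uniform ergodic convergence cleanly; a pointwise ergodic theorem suffices for the parameter-free term but is not enough when the argument is a random $\htet$, and one must upgrade it to a uniform statement using the domination supplied by Assumption~\ref{assu:uniform} together with the continuity granted by Assumption~\ref{assu:diff}. Once this is in hand, consistency of $\htet$ closes the argument.
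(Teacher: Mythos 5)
Your argument is correct and follows essentially the same route as the paper: a locally uniform law of large numbers over a neighbourhood $\mathcal{V}(\ts)$, obtained from the ergodic theorem of Proposition~\ref{prop:lln} together with the integrable envelope of Assumption~\ref{assu:uniform} and the continuity from Assumption~\ref{assu:diff}, then combined with the consistency of $\htet$ and the identity $\Sigma_{\ts}=-\tilde\pi_{\ts}(\ddot\phi_{\ts})$. The only cosmetic difference is that the paper delegates the uniform convergence step to a cited standard ULLN (Theorem 6.10 in Bierens) rather than sketching its dense-subset/equicontinuity proof as you do.
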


\begin{rema}
We observe  that the fluctuations of  the estimator $\widehat\theta_n$
are unrelated to those of $T_n$ or those of $X_t$, see
\hyperref[item:aI]{(aI)}-\hyperref[item:aII]{(aII)}. Though there is a
change of limit law from Gaussian to stable as $\E^{\theta}(\rho_0^2)$
decreases from larger to smaller than~$1$, the MLE remains asymptotically normal in the full ballistic region
(no  extra  assumption   is  required  in  Example~\ref{ex:deuxpoints}
introduced  in  Section~\ref{sect:ex}). We  illustrate  this point  by
considering a naive estimator at the end of Subsection~\ref{sect:ex1}.
\end{rema}

We conclude this section by providing a sufficient condition for
Assumption~\ref{assu:nonsingular}  to be  valid, namely  ensuring that
$\Sigma_\t$ is positive definite.

\begin{prop}\label{prop:Sigma_non_zero}
For the covariance matrix $ \Sigma_\t$ to be positive definite, it is
sufficient that the linear span in ${\mathbb R}^d$ of the gradient vectors 
$\dot \phi_{\t}(x,y)$, with $(x,y) \in\Z_+^2$ is equal to the full
space, or equivalently,  that
\[
{\rm Vect}\Big\{ \partial_\t
\Et(\omega_0^{x+1}(1-\omega_0)^y) \, : \, (x,y) \in \Z_+^2
\Big\} ={\mathbb R}^d. 
\]
\end{prop}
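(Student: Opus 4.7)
The plan is to observe that $\Sigma_\t$ is already a Gram-type matrix, so one just has to pin down when the quadratic form it induces has a nontrivial kernel, and then translate that kernel condition into the stated span condition.

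First I would rewrite, for any $u \in \R^d$,
\[
u^\intercal \Sigma_\t u = \sum_{(x,y) \in \Z_+^2} \tilde\pi_\t(x,y)\bigl(u^\intercal \dot\phi_\t(x,y)\bigr)^2,
\]
using the definition~\eqref{eq:Sigma} together with~\eqref{eq:pi_tilde_g}. This expression is manifestly nonnegative, so $\Sigma_\t$ is always positive semidefinite and is nonsingular if and only if it is positive definite, i.e.\ the sum vanishes only for $u=0$.

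The next step is to show that every weight $\tilde\pi_\t(x,y)$ is strictly positive, so that the vanishing of the sum forces $u^\intercal \dot\phi_\t(x,y) = 0$ for \emph{every} $(x,y) \in \Z_+^2$. Proposition~\ref{prop:moment_pi} already gives $\pi_\t(x)>0$ for all $x$. Moreover, since $\nu_\t$ is supported in $(0,1)$, the integrand $a^{x+1}(1-a)^y$ in~\eqref{eq:Qtransition} is strictly positive $\nu_\t$-a.s., hence $Q_\t(x,y) > 0$. Combined with~\eqref{eq:pi_tilde}, this yields $\tilde\pi_\t(x,y) > 0$ for every $(x,y) \in \Z_+^2$. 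Consequently
\[
u^\intercal \Sigma_\t u = 0 \iff u^\intercal \dot\phi_\t(x,y) = 0 \quad \forall (x,y)\in \Z_+^2,
\]
which is exactly the statement that $u$ is orthogonal to the linear span of the family $\{\dot\phi_\t(x,y)\}$. Hence $\Sigma_\t$ is positive definite if and only if this span equals $\R^d$, which is the first form of the sufficient condition.

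Finally, for the equivalence with the second (more explicit) form, I would differentiate~\eqref{eq:PhiTheta} under the integral to get
\[
\dot\phi_\t(x,y) = \frac{\partial_\t \E^\t\bigl[\omega_0^{x+1}(1-\omega_0)^y\bigr]}{\E^\t\bigl[\omega_0^{x+1}(1-\omega_0)^y\bigr]}.
\]
The denominator is a strictly positive scalar (same argument as for $Q_\t(x,y)>0$), so $\dot\phi_\t(x,y)$ and $\partial_\t \E^\t[\omega_0^{x+1}(1-\omega_0)^y]$ differ only by a positive scaling. Therefore the two families span the same subspace of $\R^d$, which proves the equivalence and completes the proposition. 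No real obstacle appears: the only nontrivial input is the positivity of $\tilde\pi_\t$, and that is immediate from Proposition~\ref{prop:moment_pi} together with $\nu_\t$ being supported in $(0,1)$.
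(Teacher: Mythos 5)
Your proposal is correct and follows essentially the same route as the paper: expand $u^\intercal \Sigma_\t u$ as a $\tilde\pi_\t$-weighted sum of squares, use the strict positivity of $\tilde\pi_\t$ to reduce vanishing of the quadratic form to orthogonality against all the $\dot\phi_\t(x,y)$, and conclude from the span hypothesis. You merely spell out two details the paper leaves implicit (the positivity of $Q_\t(x,y)$ and the fact that $\dot\phi_\t(x,y)$ is a positive scalar multiple of $\partial_\t\Et[\omega_0^{x+1}(1-\omega_0)^y]$), which is fine.
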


Section~\ref{sect:CLT}   is
devoted    to    the    proof    of    Theorem~\ref{theo:CLT}    where
Subsections~\ref{sec:CLT_dotphi},~\ref{sec:approx} and~\ref{sec:nonzero}   are concerned with the       proofs        of       Theorem~\ref{thm:CLT_dotphi}, Proposition~\ref{prop:sigma_approx} and 
Proposition~\ref{prop:Sigma_non_zero},  respectively.


\section{Examples} \label{sect:ex}

\subsection{Environment with finite and known support} \label{sect:ex1}

\begin{exam} \label{ex:deuxpoints}
Fix $a_1 < a_2 \in (0,1)$ and let
$\nu_p=p\delta_{a_1}+(1-p)\delta_{a_2}$, where  $\delta_a$ is the Dirac
mass located at value $a$. Here, the unknown parameter is the proportion
$p\in  \Theta  \subset  [0,1]$  (namely~$\theta=p$). We  suppose  that
$a_1$,      $a_2$      and       $\Theta$      are      such      that
points~\hyperref[assu:transience]{$i)$} 
and~\hyperref[assu:ballistic]{$ii)$} in
Assumption~\ref{assu:consistance} are satisfied.  
\end{exam}

This example  is easily generalized  to $\nu$ having $m\ge  2$ support
points namely $ \nu_\t=\sum_{i=1}^m p_ia_i$, where $a_1,\dots,a_m$ are
distinct,     fixed     and    known     in     $(0,1)$,    we     let
$p_m=1-\sum_{i=1}^{m-1}p_i$     and    the     parameter     is    now
$\theta=(p_1,\dots,p_{m-1})$.

In the framework of Example~\ref{ex:deuxpoints}, we have
\begin{equation} \label{equa:Phi2pts}
 \phi_{p}(x,y)      =\log    [    p    a_1^{x+1}(1-a_1)^y    +(1-p)
 a_2^{x+1}(1-a_2)^y ] ,
\end{equation}
and
\begin{equation}\label{eq:ell_2points}
  \ell_n(p):=\ell_n(\theta) = \sum_{x=0}^{n-1}\log \Big[ p
  a_1^{L_{x+1}^n+1}(1 \!  -\! a_1)_{\phantom{1}}^{L_x^n} + (1\! -\! p) 
  a_2^{L_{x+1}^n+1}(1\! -\! a_2)_{\phantom{2}}^{L_x^n} \Big].
\end{equation}

\cite{Comets_etal} proved that 
$
 \widehat p_n = \argmax_{p\in\Theta}\ell_n(p)
$ 
converges in $\PPs$-probability to $p^\star$. 
There is no  analytical expression for the
value  of  $\widehat  p_n$.  Nonetheless,  this estimator  may  be  easily
computed by  numerical methods. We now establish  that the assumptions
needed for asymptotic normality are also satisfied in this case, under
the only additional assumption that $\Theta\subset (0,1)$.  

 \begin{prop} \label{prop:2ptsNormal}
In  the framework  of  Example~\ref{ex:deuxpoints}, assuming  moreover
that     $\Theta    \subset     (0,1)$,    Assumptions~\ref{assu:diff}
to~\ref{assu:uniform} are satisfied. 
\end{prop}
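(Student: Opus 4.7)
The plan is to exploit the hypothesis $\Theta \subset (0,1)$ through compactness, which gives $\Theta \subset [\eta, 1-\eta]$ for some $\eta > 0$. This is the only real ingredient; all four assumptions then reduce to elementary bounds, and I do not expect any serious obstacle. The key closed-form miracle is that the score and its derivative are uniformly bounded in $(x,y)$.

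I would first verify Assumption~\ref{assu:diff}. Setting $A(x,y) = a_1^{x+1}(1-a_1)^y$ and $B(x,y) = a_2^{x+1}(1-a_2)^y$, both strictly positive, the argument $pA+(1-p)B$ of the logarithm in \eqref{equa:Phi2pts} is bounded below by $\eta(A \vee B) > 0$ for $p \in [\eta,1-\eta]$, hence $p \mapsto \phi_p(x,y)$ is twice continuously differentiable with
\[
\dot\phi_p(x,y) = \frac{A - B}{pA + (1-p)B}, \qquad \ddot\phi_p(x,y) = -\bigl(\dot\phi_p(x,y)\bigr)^2.
\]

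The crucial observation is that $pA + (1-p)B \ge \eta(A \vee B) \ge \eta\,|A-B|$, which yields the pointwise bounds
\[
|\dot\phi_p(x,y)| \leq \frac{1}{\eta}, \qquad |\ddot\phi_p(x,y)| \leq \frac{1}{\eta^2},
\]
uniformly in $(x,y) \in \Z_+^2$ and $p \in \Theta$. From here the integrability condition \eqref{eq:dotphi_stat_integr} of Assumption~\ref{assu:smoothness} is immediate for any $q \geq 1$, and by choosing any neighbourhood $\mathcal{V}(p) \subset [\eta,1-\eta]$ the two bounds \eqref{eq:phi_dot_square_control} of Assumption~\ref{assu:uniform} hold with the same constants.

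It remains only to check the interchange identity \eqref{eq:chaing_dot}. Direct differentiation of \eqref{eq:Qtransition} in the two-point model gives $\dot Q_p(x,y) = \binom{x+y}{x}[A(x,y) - B(x,y)]$. I would then note that for each fixed $a \in (0,1)$ and each $x \in \Z_+$, the family $\bigl\{\binom{x+y}{x} a^{x+1}(1-a)^y\bigr\}_{y \geq 0}$ is the negative binomial PMF with parameters $(x+1,a)$ and therefore sums to $1$; applying this with $a=a_1$ and $a=a_2$ and subtracting yields $\sum_y \dot Q_p(x,y) = 0$. The exchange of $\partial_p$ and the sum in \eqref{eq:chaing_dot} is justified by dominated convergence via the bound $|\dot Q_{p'}(x,y)| \leq Q_{p'}(x,y)/\eta$ together with $\sum_y Q_{p'}(x,y) = 1$ valid on $[\eta,1-\eta]$.
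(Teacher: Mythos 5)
Your proof is correct and follows essentially the same route as the paper's: explicit computation of $\dot\phi_p$ and $\ddot\phi_p$, a uniform-in-$(x,y)$ bound obtained from the lower bound on the denominator $pA+(1-p)B$ over a compact $\Theta\subset(0,1)$, and the negative binomial normalisation identity for \eqref{eq:chaing_dot}. Your version is marginally more careful in one spot — you explicitly justify the interchange of $\partial_p$ and $\sum_y$ by domination, which the paper leaves implicit — but this is a refinement of the same argument, not a different one.
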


\begin{proof}
The function $p \mapsto \phi_{p}(x,y)$ given by \eqref{equa:Phi2pts}
is twice continuously differentiable  for any $(x,y)$. The derivatives
are given by 
 \begin{align*}
  \dot \phi_{p}(x,y) &= \ee^{-\phi_{p}(x,y)} [a_1^{x+1}(1-a_1)^y
    -a_2^{x+1}(1-a_2)^y ], \\
    \ddot \phi_{p}(x,y)       &= - \dot \phi_{p}(x,y)^2. 
\end{align*}
Since $\exp  [  \phi_{p}(x,y)]\ge   p
a_1^{x+1}(1-a_1)^y $ and  $\exp  [  \phi_{p}(x,y)]\ge  (1- p)
a_2^{x+1}(1-a_2)^y $, we obtain the bounds
\begin{equation*}
|  \dot  \phi_{p}(x,y)  |  \le  \frac  1  {p}  +  \frac  1
{1-p} .
\end{equation*}
Now, under the additional assumption that $\Theta\subset (0,1)$, there
exists some $A\in (0,1)$ such that $\Theta\subset [A,1-A]$ and then 
\begin{equation}
  \label{eq:bound_dot_phi_p}
\sup_{(x,y)\in \Z_+^2} | \dot \phi_{p}(x,y) |\le \frac 2 A 
\quad \mbox{and} \quad
\sup_{(x,y)\in \Z_+^2} | \ddot \phi_{p}(x,y) |\le \frac{4}{A^2}, 
\end{equation}
which yields that \eqref{eq:dotphi_stat_integr} and 
\eqref{eq:phi_dot_square_control} are satisfied. 

Now, noting that 
\[
  \dot Q_{\t}(x,y)=\binom{x+y}x [a_1^{x+1}(1-a_1)^y-a_2^{x+1}(1-a_2)^y],
\]
and that 
\begin{equation} \label{eq:negbinom}
\sum_{y=0}^{\infty}\binom{x+y}xa^{x+1}(1-a)^y=1,  \quad \forall  x \in
\Z_+, \ \forall a \in (0,1), 
\end{equation}
yields \eqref{eq:chaing_dot}.
\end{proof}

\begin{prop} \label{prop:NonDegeEx1}
In the framework of Example~\ref{ex:deuxpoints}, the covariance matrix
$\Sigma_\t$        is         positive        definite,        namely
Assumption~\ref{assu:nonsingular} is satisfied.
\end{prop}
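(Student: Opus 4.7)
The setting is one-dimensional: the parameter is $\theta = p \in \R$, so $d=1$ and $\Sigma_p$ is a scalar. The plan is therefore to invoke Proposition~\ref{prop:Sigma_non_zero}, which in dimension $1$ reduces to exhibiting a single pair $(x,y) \in \Z_+^2$ for which $\dot{\phi}_p(x,y) \neq 0$.

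The formula for $\dot\phi_p$ was already computed in the proof of Proposition~\ref{prop:2ptsNormal}:
\[
\dot{\phi}_p(x,y) = \ee^{-\phi_p(x,y)}\bigl[a_1^{x+1}(1-a_1)^y - a_2^{x+1}(1-a_2)^y\bigr].
\]
Evaluating at $(x,y)=(0,0)$ gives $\dot{\phi}_p(0,0) = \ee^{-\phi_p(0,0)}(a_1 - a_2)$, which is nonzero since by assumption $a_1 \neq a_2$. By Proposition~\ref{prop:Sigma_non_zero}, this already yields that $\Sigma_p > 0$.

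For clarity one can also argue directly, without invoking Proposition~\ref{prop:Sigma_non_zero}. Writing $\Sigma_p = \sum_{(x,y)\in\Z_+^2} \tilde{\pi}_p(x,y)\,\dot{\phi}_p(x,y)^2$, it suffices to keep the single term $(x,y)=(0,0)$: Proposition~\ref{prop:moment_pi} implies $\pi_p(0)>0$, while $Q_p(0,0) = \int_0^1 a\,\dd\nu_p(a) = p\,a_1 + (1-p)\,a_2 > 0$ because $p \in (0,1)$ and $a_1,a_2 \in (0,1)$. Hence $\tilde{\pi}_p(0,0) = \pi_p(0)Q_p(0,0) > 0$, and combined with $\dot\phi_p(0,0)\neq 0$ this gives $\Sigma_p > 0$.

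There is no real obstacle: the one-dimensionality of the parameter collapses the positive-definiteness requirement to a single non-vanishing check, and the explicit form of $\dot{\phi}_p$ makes the check immediate from the hypothesis $a_1 \neq a_2$.
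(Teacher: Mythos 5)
Your proof is correct and takes essentially the same route as the paper: both invoke Proposition~\ref{prop:Sigma_non_zero} and check non-vanishing at $(x,y)=(0,0)$, where your $\dot\phi_p(0,0)=\ee^{-\phi_p(0,0)}(a_1-a_2)$ is just the paper's $\partial_p\E^p(\omega_0)=a_1-a_2$ multiplied by a positive factor. The additional direct argument via $\tilde\pi_p(0,0)>0$ is a harmless (and correct) elaboration of what Proposition~\ref{prop:Sigma_non_zero} already encapsulates.
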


\begin{proof}[Proof of Proposition~\ref{prop:NonDegeEx1}]
We have 
\[
\E^p(\omega_0) = p(a_1-a_2) + a_2,
\]
with derivative $a_1-a_2  \neq 0$, which achieves the  proof thanks to
Proposition~\ref{prop:Sigma_non_zero}. 
\end{proof}

Thanks             to            Theorem~\ref{theo:CLT}            and
Propositions~\ref{prop:2ptsNormal}    and~\ref{prop:NonDegeEx1},   the
sequence  $\{\sqrt{n}   (\widehat  p_n  -   p^\star)\}$  converges  in
$\PPs$-distribution  to  a  non  degenerate centered  Gaussian  random
variable, with variance 
\[
\Sigma_{p^\star}^{-1}= \Big \{ \sum_{(x,y)\in \Z_+^2} \pi_{p^\star}(x)
\binom{x+y}{x} \frac
{[a_1^{x+1}(1-a_1)^y-a_2^{x+1}(1-a_2)^y          ]^2}         {p^\star
  a_1^{x+1}(1-a_1)^y+(1-p^\star )a_2^{x+1}(1-a_2)^y}
\Big\}^{-1}.
\]

\begin{rema}\label{ex:temkin}  \cite[Temkin model, cf.][]{Hughes}. With
  $a \in (1/2,1)$ known and $\theta=p \in (0,1)$ unknown, we consider 
  $\nu_\t=p \delta_a + (1-p)  \delta_{1-a}$. This is a particular case
  of 
Example~\ref{ex:deuxpoints}. It is easy  to see that transience to the
right and ballistic regime, respectively, are equivalent to 
\[
p >1/2, \qquad p >a,
\]
and that in the ballistic  case, the limit $c=c(p)$ in (\ref{eq:lln1})
is given by 
\[
c(p)= \frac{a+p-2ap}{(2a-1)(p-a)}.
\]
We construct  a new  estimator $\tilde  p_n$ of
$p$ solving the relation 
$c(\tilde p_n)=T_n/n$, namely
\[
\tilde p_n =  \frac{a}{2a-1} \times \frac{(2a-1)T_n+ {n}}{T_n+n}.
\]
This   new   estimator   is   consistent   in   the   full   ballistic
region. However, for all $a>1/2$ and $p>a$ 
but close to it, we have $\kappa \in (1,2)$, the fluctuations of $T_n$
are of order $n^{1/\kappa}$, 
and those of $\tilde p_n$ are of the same order. This new estimator is
much more spread out than the MLE $\widehat p_n$. 
\end{rema}

\subsection{Environment with two unknown support points} \label{sect:ex2}

\begin{exam} \label{ex:deuxpts_3param}
We let $\nu_\t=p\delta_{a_1}+(1-p)\delta_{a_2}$  and now the unknown
  parameter is  $\theta=(p,a_1,a_2) \in  \Theta$, where $\Theta$  is a
  compact subset of
\[
  (0,1)\times \{(a_1,a_2)\in (0,1)^2 \, : \,  a_1 < a_2\}.
\]
We       suppose      that       $\Theta$      is       such      that
points~\hyperref[assu:transience]{$i)$} and~\hyperref[assu:ballistic]{$ii)$}                                 in
Assumption~\ref{assu:consistance} are satisfied.  
\end{exam}

The  function $\phi_\t$  and the  criterion  $\ell_n(\cdot)$ are
given        by~\eqref{equa:Phi2pts}        and~\eqref{eq:ell_2points},
respectively.  \cite{Comets_etal} have established that  the
estimator $ \widehat \t_n $ is well-defined and consistent in probability. 
Once again, there is no  analytical expression for the
value  of  $\widehat  \t_n$.  Nonetheless,  this estimator  may  also be  easily
computed by numerical methods.   We now establish that the assumptions
needed for asymptotic normality are also satisfied in this case, under
a mild additional moment assumption.

\begin{prop} \label{prop:2pts3paraNormal}
In the framework of Example~\ref{ex:deuxpts_3param}, assuming moreover
that     $\Et(\rho_0^3)      <     1$,     Assumptions~\ref{assu:diff}
to~\ref{assu:uniform} are satisfied.  
\end{prop}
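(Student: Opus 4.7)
My plan is to verify Assumptions~\ref{assu:diff}--\ref{assu:uniform} in turn. First, $\phi_\t$ still admits the closed form~\eqref{equa:Phi2pts} viewed as a function of $\t=(p,a_1,a_2)$, and the inner quantity $h_\t(x,y)=p a_1^{x+1}(1-a_1)^y+(1-p) a_2^{x+1}(1-a_2)^y$ is a polynomial in $\t$ taking positive values on the compact set $\Theta$, so Assumption~\ref{assu:diff} is immediate. Relation~\eqref{eq:chaing_dot} is obtained exactly as in the proof of Proposition~\ref{prop:2ptsNormal}: the identity $\sum_y Q_\t(x,y)=1$ coming from~\eqref{eq:negbinom} is constant in $\t$, and the exchange of $\partial_\t$ with $\sum_y$ is justified termwise by the geometric decay of $Q_\t(x,y)$ in $y$ on a neighborhood of $\t$.

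The core of the argument is the verification of~\eqref{eq:dotphi_stat_integr} and~\eqref{eq:phi_dot_square_control}. I compute the derivatives explicitly: writing $f_i(x,y)=a_i^{x+1}(1-a_i)^y$, $p_1=p$ and $p_2=1-p$, one finds $\partial_p\phi_\t=(f_1-f_2)/h_\t$, which is still bounded by a constant as in the two-parameter case, together with
\[
\partial_{a_i}\phi_\t(x,y)=\frac{p_i f_i(x,y)}{h_\t(x,y)}\Big(\frac{x+1}{a_i}-\frac{y}{1-a_i}\Big),\qquad i=1,2.
\]
Since $p_i f_i/h_\t\in[0,1]$, the first derivatives are bounded in absolute value by $C(\t)(1+x+y)$; an analogous computation for the Hessian yields $\|\ddot\phi_\t(x,y)\|\le C(\t)(1+x+y)^2$. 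The constant $C(\t)$ depends only on the distance of $p,a_1,a_2$ to $\{0,1\}$, so these bounds remain valid uniformly in $\t'$ on a small compact neighborhood $\mathcal{V}(\t)\subset\Theta$ of~$\t$.

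It then suffices to show that $\tilde\pi_\t((1+x+y)^{2q})<+\infty$ for some $q>1$. Proposition~\ref{prop:moment_pi} with $j=2$ reduces this to a control of the third moment of $\sum_{n\ge 1}\prod_{k=1}^n \rho_k$, which Minkowski's inequality and the i.i.d.\ structure of $(\rho_k)_{k\ge 1}$ give by
\[
\Big\|\sum_{n\ge 1}\prod_{k=1}^n\rho_k\Big\|_3\le \sum_{n\ge 1}\Et(\rho_0^3)^{n/3}<+\infty
\]
under $\Et(\rho_0^3)<1$. Hence $\sum_x x^3\pi_\t(x)<+\infty$. To transfer this to $\tilde\pi_\t$, I observe that conditionally on $X=x$, the variable $Y$ under $Q_\t(x,\cdot)$ is a $\nu_\t$-mixture of negative binomial laws with $x+1$ successes, so $\sum_y y^3 Q_\t(x,y)=O((x+1)^3)$ and therefore $\tilde\pi_\t((1+x+y)^3)<+\infty$. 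Choosing $q=3/2$ then gives~\eqref{eq:dotphi_stat_integr} and, since the bounds are already uniform on $\mathcal{V}(\t)$, both inequalities in~\eqref{eq:phi_dot_square_control}.

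The main obstacle is this last moment step. Unlike the one-parameter case, the derivatives $\partial_{a_i}\phi_\t$ are not bounded in $(x,y)$, so the trivial estimate~\eqref{eq:bound_dot_phi_p} must be replaced by the polynomial bounds above and compensated by genuine moment estimates on the invariant law $\pi_\t$. This is precisely where the extra hypothesis $\Et(\rho_0^3)<1$ enters, strengthening the ballistic condition $\Et(\rho_0)<1$ just enough to secure the third moment of $\pi_\t$ via Proposition~\ref{prop:moment_pi}.
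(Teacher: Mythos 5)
Your proof is correct and follows essentially the same route as the paper: explicit derivatives bounded by $C(1+x+y)$ for the gradient and $C(1+x+y)^2$ for the Hessian, reduction to the third moment of $\pi_\t$ via Proposition~\ref{prop:moment_pi} and Minkowski under $\Et(\rho_0^3)<1$, and the negative-binomial identities for~\eqref{eq:chaing_dot}. The only (harmless) variation is your transfer of the moment bound to $\tilde\pi_\t$ through conditional moments of $Q_\t(x,\cdot)$, where the paper simply uses that both marginals of $\tilde\pi_\t$ equal $\pi_\t$.
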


\begin{proof}
 In  the proof of  Proposition~\ref{prop:2ptsNormal}, we  have already
controled the derivative of  $\t \mapsto \phi_\t(x,y)$ with respect to
$p$.  Hence, it  is now  sufficient  to control  its derivatives  with
respect    to    $a_1$    and    $a_2$   to    achieve    the    proof
of~\eqref{eq:dotphi_stat_integr}
and~\eqref{eq:phi_dot_square_control}. We have
 \begin{align}\label{equa:Partial3param}    
\nonumber  \partial_{a_1} \phi_{\t}(x,y)     &    =
\ee^{-\phi_{\t}(x,y)} pa_1^x(1-a_1)^{y-1}[(x+1) (1-a_1)-ya_1],\\
\nonumber \partial_{a_2} \phi_{\t}(x,y) & = \ee^{-\phi_{\t}(x,y)}
(1-p)a_2^x(1-a_2)^{y-1}[(x+1) (1-a_2)-ya_2].
\end{align}
Since 
\[
  \ee^{-\phi_{\t}(x,y)}  pa_1^x(1-a_1)^{y-1}  \leq \frac1{a_1(1-a_1)},
\]
and
\[
  \ee^{-\phi_{\t}(x,y)}  (1-p)a_2^x(1-a_2)^{y-1}  \leq \frac1{a_2(1-a_2)},
\]
we can see that there exists a constant $B$ such that
\begin{equation} \label{equa:majo_dot_phi}
  |\partial_{a_j}  \phi_{\t}(x,y)  |   \leq  \Big|  \frac{x+1}{a_j}  -
  \frac{y}{1-a_j}\Big| \leq B (x+1+y), \quad \mbox{for } j=1,2.
\end{equation}
Now,  we prove  that \eqref{eq:dotphi_stat_integr}  is  satisfied with
$q=3/2$.  From \eqref{equa:majo_dot_phi},  it is  sufficient  to check
that 
\[
\sum_{k\in\Z_+}k^{3}\pi_{\t}(k) = \sum_{x,y\in\Z_+} x^3 \tilde \pi_\t(x,y) = \sum_{x,y\in\Z_+} y^3 \tilde \pi_\t(x,y)<\infty,
\]
which is equivalent to 
\[
  \sum_{k\geq   3}   k(k-1)(k-2)\pi_{\t}(k)   = 6
  \Et \Big[\Big(\sum_{n\geq 1} \prod_{k=1}^n \rho_k \Big)^3 \Big]<\infty,
\]
where        the         last        equality        comes        from
Proposition~\ref{prop:moment_pi}. From Minkowski's inequality, we have 
\[
  \Et \Big[\Big(\sum_{n\geq 1} \prod_{k=1}^n \rho_k \Big)^3 \Big] 
  \leq
  \Big\{ \sum_{n \geq 1} \Big[\Et \Big( \prod_{k=1}^n \rho_k^3 \Big) \Big]^{1/3}
  \Big\}^{3} 
   = \Big\{\sum_{n \geq 1} [\Et( \rho_0^3 )]^{n/3} \Big\}^{3},
\]
where the right-hand side term is finite according to the additional
assumption    that    $\Et(\rho_0^3)<1$.    Since   the    bound    in
\eqref{equa:majo_dot_phi}  does not  depend on  $\theta$  and $\pi_\t$
possesses a finite third moment, the first part of condition
\eqref{eq:phi_dot_square_control} on the gradient vector is also satisfied. 

Now, we turn to~\eqref{eq:chaing_dot}. Noting that
\[
  \partial_{a_1}       Q_\t        (x,y)       =       \binom{x+y}{x}
  pa_1^x(1-a_1)^{y-1}[(x+1) (1-a_1)-ya_1],
\]
\[
  \partial_{a_2}       Q_\t        (x,y)       =       \binom{x+y}{x}
  (1-p)a_2^x(1-a_2)^{y-1}[(x+1) (1-a_2)-ya_2], 
\]
\[
\sum_{y=0}^{\infty}y \binom{x+y} x a^{x+1}(1-a)^y= (x+1)\frac{1-a}{a},
\quad \forall x \in \Z_+, \ \forall a \in (0,1), 
\]
and using \eqref{eq:negbinom} yields \eqref{eq:chaing_dot}.

The second order derivatives of $\phi_{\t}$ are given by 
 \begin{align*}
  \partial_{p}^2 \phi_{\t}(x,y) & = - [\partial_{p} \phi_{\t} (x,y)]^2, \\
\partial_{p}   \partial_{a_1}  \phi_{\t}(x,y)   &   =  [\partial_{a_1}
\phi_{\t} (x,y)] \times \left( \frac 1 p - \partial_{p} \phi_{\t}
  (x,y)\right), \\
\partial_{a_1}\partial_{a_2}  \phi_{\t}(x,y)   &  =  - [ \partial_{a_1}
\phi_{\t}(x,y) ]\times [\partial_{a_2} \phi_{\t}(x,y) ], \\
\partial_{a_1}^2 \phi_{\t} (x,y) & = [\partial_{a_1} \phi_{\t}(x,y)]
\times \Big[ - \partial_{a_1} \phi_{\t}(x,y) +\frac x {a_1} -\frac{y-1}{1-a_1}\\
& -\frac{x+1+y}{(x+1)(1-a_1)-ya_1} 
\Big],
\end{align*}
and similar formulas for $a_2$ instead of $a_1$.  The second part of~\eqref{eq:phi_dot_square_control}  on  the  Hessian matrix  thus
follows from the previous expressions 
combined     with~\eqref{eq:bound_dot_phi_p},~\eqref{equa:majo_dot_phi}
and the existence of a second order moment for $\pi_{\t}$.
\end{proof}

\begin{prop} \label{prop:NonDegeEx2}
In the framework of Example~\ref{ex:deuxpts_3param}, the covariance matrix
$\Sigma_\t$        is         positive        definite,        namely
Assumption~\ref{assu:nonsingular} is satisfied. 
\end{prop}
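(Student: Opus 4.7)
The plan is to invoke Proposition~\ref{prop:Sigma_non_zero}, which reduces the positive definiteness of $\Sigma_\t$ to the linear span of the gradient vectors $\partial_\t \E^\t(\omega_0^{x+1}(1-\omega_0)^y)$, $(x,y) \in \Z_+^2$, being all of $\R^3$. Since $\theta = (p,a_1,a_2)$ has three components, I need to exhibit three indices $(x,y)$ for which the corresponding gradient vectors are linearly independent.

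The simplest choice is to take $y = 0$ and $x \in \{0,1,2\}$, which yields the functions $m_k(\t) := \E^\t(\omega_0^{k+1}) = p a_1^{k+1} + (1-p) a_2^{k+1}$ for $k=0,1,2$. Their gradients are
\[
\nabla_\t m_k(\t) = \bigl(a_1^{k+1} - a_2^{k+1},\; p(k+1)a_1^{k},\; (1-p)(k+1)a_2^{k}\bigr)^\intercal.
\]
The goal is to show that the $3\times 3$ determinant
\[
D := \det\bigl[\nabla_\t m_0(\t),\ \nabla_\t m_1(\t),\ \nabla_\t m_2(\t)\bigr]
\]
is nonzero.

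I would compute $D$ in three stages. First, factor $p$ and $1-p$ out of the rows corresponding to $\partial_{a_1}$ and $\partial_{a_2}$; this produces $p(1-p)$ times a determinant whose first row is $(a_1-a_2,\,a_1^2-a_2^2,\,a_1^3-a_2^3)$. Second, factor $(a_1-a_2)$ out of this row using $a_1^{k+1}-a_2^{k+1} = (a_1-a_2)\sum_{j=0}^{k} a_1^j a_2^{k-j}$. Third, perform elementary column operations (subtracting the first column from the other two) to reveal a further factor of $(a_1-a_2)^3$. The final identity I expect to obtain is
\[
D = p(1-p)(a_1-a_2)^4,
\]
which is strictly nonzero under the standing assumption $\t \in \Theta \subset (0,1)\times\{a_1<a_2\}$.

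The only real work is the determinant manipulation, which is essentially a variant of the Vandermonde identity and poses no conceptual difficulty. Once $D \neq 0$ is established, the three gradient vectors span $\R^3$ and Proposition~\ref{prop:Sigma_non_zero} yields positive definiteness of $\Sigma_\t$, so Assumption~\ref{assu:nonsingular} is satisfied.
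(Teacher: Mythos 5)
Your proposal is correct and follows exactly the paper's own argument: the paper also applies Proposition~\ref{prop:Sigma_non_zero} to the moments $\Et[\omega_0^{k+1}]$ for $k=0,1,2$ and computes the same $3\times 3$ determinant, obtaining $p(1-p)(a_1-a_2)^4 \neq 0$. No differences worth noting.
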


\begin{proof}[Proof of Proposition~\ref{prop:NonDegeEx2}]
We have
\[
\Et[\omega_0^{x+1}(1-\omega_0)^y]=
pa_1^{x+1}(1-a_1)^y + (1-p)a_2^{x+1}(1-a_2)^y.
\]
The determinant  of $\Big(\partial_\t \Et[\omega_0^{k+1}]\Big)_{k=0,1,2}$
is given by
\[
\begin{array}{|ccc|}
a_1 - a_2 & a_1^2 - a_2^2 & a_1^3 - a_2^3 \\
p & 2 p a_1 & 3 p a_1^2 \\
(1-p) & 2(1-p) a_2 & 3 (1-p) a_2^2
\end{array}
\]
which can be rewritten as
\[
  p(1-p)(a_1-a_2)^4.
\] 
As we have $a_1 \neq  a_2$ and $ p \in (0,1)$, this determinant is non zero and this 
completes the proof, thanks to Proposition~\ref{prop:Sigma_non_zero}.
\end{proof}

Thanks             to            Theorem~\ref{theo:CLT}            and
Propositions~\ref{prop:2pts3paraNormal}      and~\ref{prop:NonDegeEx2},
under the  additional assumption that  $\Et(\rho_0^3)<1$, the sequence
$\{\sqrt{n}    (\widehat   \t_n    -    \t^\star)\}$   converges    in
$\PPs$-distribution  to  a  non  degenerate centered  Gaussian  random
vector. 

\subsection{Environment with Beta distribution} \label{sect:ex3}

\begin{exam} \label{ex:beta}
We let $\nu$ be  a Beta distribution with parameters $(\alpha,\beta)$,
namely
\[
\dd\nu(a) = \frac 1 {\Beta(\alpha,\beta)} a^{\alpha
  -1}(1-a)^{\beta -1} \dd a, \quad
   \Beta(\alpha,\beta)   =   \int_0^1   t^{\alpha
  -1}(1-t)^{\beta -1}\dd t.
\]
Here,  the  unknown parameter  is  $\theta=(\alpha,\beta) \in  \Theta$
where $\Theta$ is a compact subset of 
\[
  \{ (\alpha,\beta) \in (0,+ \infty)^2 \, : \, \alpha > \beta+1 \}.
\]
As $\E^{\t}(\rho_0)=\beta/(\alpha-1)$, the  constraint $\alpha > \beta
+1$        ensures       that       points~\hyperref[assu:transience]{$i)$}
and~\hyperref[assu:ballistic]{$ii)$}                                 in
Assumption~\ref{assu:consistance} are satisfied. 
\end{exam}

In the framework of Example~\ref{ex:beta}, we have
\begin{equation}
\phi_\t(x,y) = \log \frac{\Beta(x+1+\alpha,y+\beta)}{\Beta(\alpha,\beta)}
\end{equation}
and
\begin{align*} \label{eq:ell_beta}
  \ell_n(\theta) &= -n  \log \Beta(\alpha,\beta)+ \sum_{x=0}^{n-1} \log
  \Beta(L_{x+1}^n + \alpha+1,L_x^n + \beta)\\
&=   
 \sum_{x=0}^{n-1} \log \frac{(L_{x\!+\!1}^n \!+ \!\alpha)(L_{x\!+\!1}^n\! +\! \alpha\!-\!1)\ldots \alpha
 \times (L_{x}^n \!+\! \beta\!-\!1)(L_{x}^n \!+\! \beta\!-\!2)\ldots \beta}
 {(L_{x+1}^n \!+\!L_{x}^n \!+\! \alpha \!+\!\beta\!-\!1)(L_{x+1}^n\! +\!L_{x}^n \!+\! \alpha \!+\!\beta\!-\!2)\ldots
 (\alpha+ \beta)}.
 \end{align*}
In this  case, \cite{Comets_etal} proved that  $\htet$ is well-defined
and consistent in probability.
We now establish that  the assumptions needed for asymptotic normality
are also satisfied in this case.

\begin{prop} \label{prop:betaNormal}
In  the  framework  of  Example~\ref{ex:beta},  Assumptions~\ref{assu:diff}
to~\ref{assu:uniform} are satisfied.  
\end{prop}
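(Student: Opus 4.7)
The plan is to exploit the explicit Gamma-function form of $\phi_\theta$, which gives closed expressions for all derivatives and hence the required uniform integrability estimates. Writing
\[
\phi_\theta(x,y)=\log\Gamma(x+1+\alpha)+\log\Gamma(y+\beta)-\log\Gamma(x+1+y+\alpha+\beta)-\log\Gamma(\alpha)-\log\Gamma(\beta)+\log\Gamma(\alpha+\beta),
\]
smoothness in $\theta=(\alpha,\beta)$ over the compact set $\Theta$ is immediate (the relevant poles of $\log\Gamma$ are excluded), so Assumption~\ref{assu:diff} holds. Differentiating yields closed forms for $\dot\phi_\theta$ and $\ddot\phi_\theta$ in terms of the digamma $\psi$ and trigamma $\psi'$ functions.

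The heart of the argument is obtaining bounds on $\dot\phi_\theta$ and $\ddot\phi_\theta$ that are uniform on a compact neighbourhood $\mathcal{V}(\theta)\subset\Theta$. Using the telescoping identity $\psi(z+n)-\psi(z)=\sum_{k=0}^{n-1}(z+k)^{-1}$, I would rewrite
\[
\partial_\alpha\phi_\theta(x,y)=\sum_{k=0}^{x}\Bigl[\frac{1}{\alpha+k}-\frac{1}{\alpha+\beta+k}\Bigr]-\sum_{k=x+1}^{x+y}\frac{1}{\alpha+\beta+k},
\]
and derive the analogous expression for $\partial_\beta\phi_\theta$. Pairing terms in the first sum produces $\beta/[(\alpha+k)(\alpha+\beta+k)]$, a series absolutely convergent uniformly in $x$, while the second sum is controlled by $\log(1+y)$. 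This should yield $\|\dot\phi_{\theta'}(x,y)\|\le C(1+\log(1+x)+\log(1+y))$ uniformly on $\mathcal{V}(\theta)$. The trigamma analogue gives
\[
\partial_\alpha^2\phi_\theta(x,y)=\sum_{k=0}^{x+y}\frac{1}{(\alpha+\beta+k)^2}-\sum_{k=0}^{x}\frac{1}{(\alpha+k)^2},
\]
with similar formulas for the other second derivatives; because $\sum_k(z+k)^{-2}$ converges, $\|\ddot\phi_{\theta'}(x,y)\|$ is uniformly bounded in both $(x,y)$ and $\theta'\in\mathcal{V}(\theta)$.

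Given these bounds, the integrability conditions reduce to mild moment conditions on $\tilde\pi_\theta$. The ballistic regime together with Proposition~\ref{prop:moment_pi} yields a finite first moment for $\pi_\theta$; by stationarity (or directly from $\sum_y y\,Q_\theta(x,y)=(x+1)\E^\theta(\rho_0)$) both marginals of $\tilde\pi_\theta$ then have finite mean. Since $\log^{2q}(1+x+y)\le C_{q,\epsilon}(1+x+y)^\epsilon$ for any $\epsilon>0$, choosing $\epsilon=1$ gives $\tilde\pi_\theta(\|\dot\phi_\theta\|^{2q})<\infty$ for every $q>1$, as well as the supremum version of~\eqref{eq:phi_dot_square_control}; the Hessian part of Assumption~\ref{assu:uniform} is immediate from uniform boundedness. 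To verify~\eqref{eq:chaing_dot}, I would differentiate $\sum_y Q_\theta(x,y)=1$ term by term, justifying the interchange by dominated convergence from $|\partial_\theta Q_\theta(x,y)|=Q_\theta(x,y)\|\dot\phi_\theta(x,y)\|$ together with $\sum_y Q_\theta(x,y)\log(1+y)<\infty$.

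The main obstacle I anticipate is the bookkeeping in the second step: the two telescoping digamma sums individually diverge as $\log(1+x)$ and $\log(1+y)$, and one must pair terms so that only the logarithmic difference survives, uniformly over $\theta'\in\mathcal{V}(\theta)$. Once those uniform bounds are established, every remaining verification follows from the ballistic first-moment property of $\pi_\theta$ given in Proposition~\ref{prop:moment_pi}.
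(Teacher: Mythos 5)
Your proposal is correct and follows essentially the same route as the paper: the same telescoped digamma/trigamma expressions, the same logarithmic bounds $|\partial_\alpha\phi_\theta(x,y)|\lesssim\log(1+y)$ and $|\partial_\beta\phi_\theta(x,y)|\lesssim\log(1+x)$ combined with the finite first moment of $\pi_\theta$ from Proposition~\ref{prop:moment_pi}, uniformly bounded second derivatives, and a domination argument for~\eqref{eq:chaing_dot}. The only point to tighten is the interchange of $\partial_\theta$ and $\sum_y$: your dominating bound $Q_\theta(x,y)\log(1+y)$ depends on $\theta$, so it must be made uniform over a neighbourhood --- the paper achieves this via $Q_{\theta'}(x,y)\le C\,Q_{\theta_0}(x,y)$ with $\theta_0$ the coordinatewise infimum of $\Theta$ --- although your direct identity $\sum_y y\,Q_\theta(x,y)=(x+1)\E^\theta(\rho_0)$ then yields the required summability more simply than the paper's detour through the stationarity and positivity of $\pi_{\theta_0}$.
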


\begin{proof}[Proof of Proposition~\ref{prop:betaNormal}]
Relying  on   classical  identities  on  the  Beta
function, it may be seen after some computations that 
\[
  \phi_\t(x,y)  =  \sum_{k=0}^x  \log  (k+\alpha)  +  \sum_{k=0}^{y-1}
  \log(k+\beta) - \sum_{k=0}^{x+y} \log(k+\alpha+\beta), 
\]
where a sum over an empty set of indices is zero. 
As a consequence, we obtain
\begin{align}
\label{equa:PartialAlpha}    \partial_\alpha    \phi_\t    (x,y)    &=
\sum_{k=0}^x         \frac1{k+\alpha}        -        \sum_{k=0}^{x+y}
\frac1{k+\alpha+\beta}\\ 
\nonumber &= \sum_{k=0}^x \frac{\beta}{(k+\alpha)(k+\alpha+\beta)} - 
   \sum_{k=1}^{y} \frac{1}{k+x+\alpha+\beta}.
\end{align}
The fact  that $\Theta$  is a compact  set included  in $(0,+\infty)^2$
yields the  existence of a  constant $A$ independent of  $\theta$, $x$
and $y$ such that both 
\begin{align*}
\sum_{k=0}^x       \frac{\beta}{(k+\alpha)(k+\alpha+\beta)}      &\leq
\sum_{k=0}^{+\infty} \frac{\beta}{(k+\alpha)(k+\alpha+\beta)} \leq A,
\end{align*}
and
\begin{align*}
\sum_{k=1}^{y}    \frac{1}{k+x+\alpha+\beta}    &\leq   \sum_{k=1}^{y}
\frac{1}{k+\alpha+\beta} \leq A \log(1+y) . 
\end{align*}
The same holds for $\partial_\beta  \phi_\t(x,y)$. Hence, we have
\begin{equation} \label{equa:majo_dot_phi_beta}
|\partial_\alpha \phi_\t (x,y)| \leq A' \log(1+y) \quad \mbox{and} \quad
|\partial_\beta \phi_\t (x,y)| \leq A'\log(1+x),
\end{equation}
for some  positive constant  $A'$. Since there  exists a  constant $B$
such that for any integer $x$ 
\[
  \log(1+x)\leq B \sqrt[4]{x},
\]
we deduce from \eqref{equa:majo_dot_phi_beta} that there exists $C>0$ such that
\begin{equation} \label{equa:majo_dot_phi_beta2}
  |\partial_\alpha\phi_{\t}(x,y)|^{2q}\leq Cy 
  \quad \mbox{and} \quad
  |\partial_\beta\phi_{\t}(x,y)|^{2q}\leq Cx,
\end{equation}
where $q=2$.  From Proposition~\ref{prop:moment_pi}, we know that
$\pi_\t$  possesses   a  finite   first  moment,  and   together  with~\eqref{equa:majo_dot_phi_beta2},     this     is    sufficient     for~\eqref{eq:dotphi_stat_integr}  to  be satisfied.  Since  the bound  in~\eqref{equa:majo_dot_phi_beta2} does not depend on $\theta$, the first
part of condition~\eqref{eq:phi_dot_square_control}  on  the  gradient  vector  is  also
satisfied. 

The second order derivatives of $\phi_{\t}$ are given by 
 \begin{align*}
  \partial_{\alpha}^2  \phi_{\t}(x,y)   &  =  -\sum_{k=0}^x   \frac  1
  {(k+\alpha)^2} +\sum_{k=0}^{x+y} \frac 1 {(k+\alpha+\beta)^2} , \\
\partial_{\alpha}  \partial_{\beta} \phi_{\t}(x,y)  & =\sum_{k=0}^{x+y}
\frac 1 {(k+\alpha+\beta)^2} ,
\end{align*}
and  similar formulas  for  $\beta$ instead  of  $\alpha$.  Thus,  the
second part of condition~\eqref{eq:phi_dot_square_control}  for  the  Hessian  matrix
follows        by       arguments        similar        to       those
establishing the first part of~\eqref{eq:phi_dot_square_control} for the gradient
vector.

Now, we prove that it is  possible to exchange the order of derivation
and summation to get~\eqref{eq:chaing_dot}. To do so, we prove that
\begin{equation} \label{equa:ConvNorm}
\mbox{the series } \sum_y \| \dot Q_\t (x,y) \| \mbox{ converges uniformly in $\t$}
\end{equation}
for any integer $x$. 
Define $\theta_0=(\alpha_0,\beta_0)$ with
\[
  \alpha_0 = \inf (\proj_1(\Theta)) \quad \mbox{and} \quad 
\beta_0 = \inf (\proj_2(\Theta)),
\]
where $\proj_i, i=1,2$ are the two projectors on the coordinates. 
Note that $\theta_0$ does not necessarily belong to $\Theta$. However, 
it still belongs to the ballistic region $\{\alpha
>\beta+1\}$. For  any $a \in (0,1)$  and any integers $x$  and $y$, we
have 
\[
  a^{x+1+\alpha-1}(1-a)^{y+\beta-1} \leq a^{x+1+\alpha_0-1}(1-a)^{y+\beta_0-1},
\]
which yields
\[
\Beta(x+1+\alpha,y+\beta) \leq \Beta(x+1+\alpha_0,y+\beta_0),
\]
as well as
\[
  Q_{\t}(x,y) \leq \frac{\Beta(\alpha_0,\beta_0)}{\Beta(\alpha,\beta)}
  Q_{\theta_0}(x,y). 
\] 
Using the fact that the beta function is continuous on the compact set
$\Theta$ yields the existence of a constant $C$ such that
\[
  Q_{\t}(x,y) \leq C Q_{\theta_0}(x,y),
\]
for  any integers  $x$  and $y$.  Now  recall that  $\dot Q_\t(x,y)  =
Q_\t(x,y) \dot  \phi_\t (x,y)$. Hence,  using the last  inequality and
\eqref{equa:majo_dot_phi_beta2}, it is sufficient to prove that 
\begin{equation} \label{equa:MomentQ}
\sum_y y Q_{\theta_0}(x,y) < \infty,
\end{equation}
to get \eqref{equa:ConvNorm}. We have
\[
  \sum_x \Big(\sum_y y  Q_{\theta_0}(x,y) \Big) \pi_{\theta_0}(x) =
  \sum_y y \pi_{\theta_0}(y) < \infty,
\]
where the last inequality comes  from the fact that $\theta_0$ lies in
the ballistic region and thus $\pi_{\theta_0}$
possesses  a  finite  first  moment.  Since $\pi_{\theta_0}(x) >0$ for any integer~$x$, we deduce that~\eqref{equa:MomentQ}  is
satisfied for any integer  $x$ which proves that \eqref{equa:ConvNorm}
is satisfied.
\end{proof}

\begin{prop} \label{prop:NonDegeEx3}
  In the framework of Example~\ref{ex:beta}, the covariance matrix
$\Sigma_\t$        is         positive        definite,        namely
Assumption~\ref{assu:nonsingular} is satisfied. 
\end{prop}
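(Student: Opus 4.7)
The plan is to mimic the strategy used for Proposition~\ref{prop:NonDegeEx2}: invoke Proposition~\ref{prop:Sigma_non_zero} and exhibit two pairs $(x_1,y_1),(x_2,y_2)\in\Z_+^2$ such that the corresponding gradient vectors $\partial_\t \E^\t[\omega_0^{x_i+1}(1-\omega_0)^{y_i}]$ are linearly independent in $\R^2$. Since $d=2$ here, finding two such pairs is enough to conclude that the full span equals $\R^2$.

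The natural choice is $(x_1,y_1)=(0,0)$ and $(x_2,y_2)=(0,1)$, because the first two elementary moments of the Beta distribution have especially clean closed forms:
\[
m_1(\t) := \E^\t[\omega_0] = \frac{\alpha}{\alpha+\beta}, \qquad
m_2(\t) := \E^\t[\omega_0(1-\omega_0)] = \frac{\alpha\beta}{(\alpha+\beta)(\alpha+\beta+1)}.
\]
Direct differentiation gives $\partial_\alpha m_1 = \beta/(\alpha+\beta)^2$ and $\partial_\beta m_1 = -\alpha/(\alpha+\beta)^2$, while logarithmic differentiation of $m_2$ yields $\partial_\alpha m_2 = m_2\bigl[1/\alpha - 1/(\alpha+\beta) - 1/(\alpha+\beta+1)\bigr]$ and $\partial_\beta m_2 = m_2\bigl[1/\beta - 1/(\alpha+\beta) - 1/(\alpha+\beta+1)\bigr]$.

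Then I would compute the $2\times 2$ Jacobian determinant. Factoring out $m_2/(\alpha+\beta)^2$ from the second row and expanding, the contributions $\alpha/\alpha$ and $\beta/\beta$ produce the constant $2$, while the terms involving $1/(\alpha+\beta)$ and $1/(\alpha+\beta+1)$ combine against $\alpha+\beta$ to leave
\[
\det\bigl(\partial_\t(m_1,m_2)\bigr) = \frac{m_2}{(\alpha+\beta)^2}\cdot\frac{1}{\alpha+\beta+1} = \frac{\alpha\beta}{(\alpha+\beta)^3(\alpha+\beta+1)^2},
\]
which is strictly positive on the parameter set $\Theta\subset\{\alpha>\beta+1,\ \beta>0\}$. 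Hence the two gradients are linearly independent, and Proposition~\ref{prop:Sigma_non_zero} concludes.

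There is no genuine obstacle: the argument is entirely parallel to Proposition~\ref{prop:NonDegeEx2}, and the only non-trivial step is the algebraic collapse of the Jacobian determinant, which is a routine (if slightly tedious) calculation. Should the cancellations prove awkward to present, an equally valid backup would be to use $(0,0)$ and $(1,0)$ instead, leading to a similar determinant expression for the moments $\alpha/(\alpha+\beta)$ and $\alpha(\alpha+1)/[(\alpha+\beta)(\alpha+\beta+1)]$.
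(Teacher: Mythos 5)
Your proof is correct, and it takes a genuinely different route from the paper's. The paper also invokes Proposition~\ref{prop:Sigma_non_zero}, but works with the first formulation: it computes the diagonal gradient vectors $\dot\phi_\t(x,x)$ explicitly from the digamma-type sums, observes that $\dot\phi_\t(0,0)$ is collinear to $(\beta,-\alpha)^\intercal$ while $\dot\phi_\t(x,x)\to(-\log 2,-\log 2)^\intercal$ as $x\to\infty$, and concludes that these vectors span $\R^2$ since $(\beta,-\alpha)$ and $(1,1)$ are never collinear for $\alpha,\beta>0$. You instead use the equivalent moment formulation and the same determinant template as the paper's proof of Proposition~\ref{prop:NonDegeEx2}: the gradients of $m_1=\E^\t[\omega_0]=\alpha/(\alpha+\beta)$ and $m_2=\E^\t[\omega_0(1-\omega_0)]=\alpha\beta/[(\alpha+\beta)(\alpha+\beta+1)]$ have Jacobian determinant $\alpha\beta/[(\alpha+\beta)^3(\alpha+\beta+1)^2]>0$, which I have checked and is right. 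Your argument is entirely finite and explicit, avoiding the limiting step; the paper's avoids any determinant computation and shows more directly which gradient vectors do the spanning. Either is acceptable, and your version has the mild advantage of being uniform with the treatment of Example~\ref{ex:deuxpts_3param}.
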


\begin{proof}[Proof of Proposition~\ref{prop:NonDegeEx3}]
One easily checks that
\[
\dot \phi_\t (x,x)= 
\begin{pmatrix}
\frac{1}{\alpha +x}+\frac{1}{\alpha +x-1}+ \dots + \frac{1}{\alpha}
-   \frac{1}{\alpha   +\beta+2x}-\frac{1}{\alpha  +\beta+2x-1}-\dots-
\frac{1}{\alpha+\beta} 
\\ \\
\frac{1}{\beta +x-1}+\frac{1}{\beta +x-2}+\dots+ \frac{1}{\beta}
-   \frac{1}{\alpha   +\beta+2x}-\frac{1}{\alpha  +\beta+2x-1}-\dots-
\frac{1}{\alpha+\beta} 
\end{pmatrix} .
\]
Hence, $\dot \phi_\t (0,0)$ is collinear to
 $(\beta,-\alpha)^\intercal$ and
 $\dot \phi_\t (x,x) \to  (-\log 2,-\log 2)^\intercal$ 
 as $x \to \infty$. This shows that $\dot \phi_\t (x,x), x \in \Z_+$,
 spans the whole space, and Proposition~\ref{prop:Sigma_non_zero} applies.
\end{proof}

Thanks             to            Theorem~\ref{theo:CLT}            and
Propositions~\ref{prop:betaNormal}    and~\ref{prop:NonDegeEx3},   the
sequence  $\{\sqrt{n}  (\widehat  \t_n  -  \t^\star)\}$  converges  in
$\PPs$-distribution  to  a  non  degenerate centered  Gaussian  random
vector.


\section{Asymptotic normality} \label{sect:CLT}
We now establish the asymptotic normality of $\widehat \t_n$ stated in
Theorem~\ref{theo:CLT}.
The     most      important     step     lies      in     establishing
Theorem~\ref{thm:CLT_dotphi} that states 
a CLT for  the gradient vector
of the criterion $\ell_n$ (see Section~\ref{sec:CLT_dotphi}). To obtain the asymptotic 
 normality of  $\widehat \t_n$ from the  former CLT, we make  use of a
 uniform weak law of large numbers (UWLLN) in Section~\ref{sec:AN}. The proof of the UWLLN is contained in Section~\ref{sec:approx} and establishes Proposition~\ref{prop:sigma_approx} giving a way to approximate the Fisher information matrix. Finally Section~\ref{sec:nonzero} establishes the proof of Proposition~\ref{prop:Sigma_non_zero} stating a condition under which
the Fisher information matrix is non singular.


\subsection{A central limit theorem for the gradient of the criterion}
\label{sec:CLT_dotphi}
In this  section, we prove  Theorem~\ref{thm:CLT_dotphi}, that is,
the existence of a CLT for the score vector sequence $\dot \ell_n(\ts)$. Note that
according to~\eqref{eq:logvraisannealed}, we have 
\begin{equation}
\frac   1    {\sqrt{n}   }\dot   \ell_n(\ts)    \sim   \frac1   {\sqrt
  n}\sum_{k=0}^{n-1} \dot\phi_{\ts} ( Z_k, Z_{k+1}) ,
\end{equation}
where  $(Z_k)_{0\le k  \le n}$  is  the Markov  chain introduced  in
Section~\ref{sect:BP}. 
First, note that under Assumption~\ref{assu:smoothness} this quantity 
is  integrable and centered  with respect  to $\PPs$.   Indeed, recall
that $\dot\phi_\t(x,y)=\dot Q_\t(x,y)/Q_\t(x,y),$ 
 thus  we can write for all $x \in \Z_+$, 
\begin{align}\label{eq:martingale}
 \EEs(\dot\phi_{\ts}  (Z_k,  Z_{k+1}) | Z_k=x)  
  &= 
  \sum_{y \in \Z_+} \frac  {\dot Q_{\ts}(x,y)} {Q_{\ts}(x,y)} Q_{\ts}(x,y)
= \partial_\t \Big(\sum_{y\in \Z_+} Q_{\t}(x,y) \Big) \Big|_{\t=\ts} \nonumber \\
&= \partial_\t (1) \Big|_{\t=\ts} =0,
\end{align}
where  we  have  used  \eqref{eq:chaing_dot} to  interchange  sum  and
derivative. Then, 
\[
 \EEs(\dot\phi_{\ts} (Z_k, Z_{k+1}))=0.
\]
Now, we rely on a  CLT for centered square-integrable martingales, see
Theorem 3.2 in~\cite{Hall_Heyde}. We introduce the quantities 
\[
\forall  1\le k \le n, \quad 
U_{n,k}=\frac  1  {\sqrt  n}  \dot\phi_{\ts}  (Z_{k-1},  Z_{k})  \quad
\mbox{ and } \quad S_{n,k} =\sum_{j=1}^k U_{n,j}, 
\]
as         well        as        the         natural        filtration
$\mathcal{F}_{n,k}=\mathcal{F}_{k}:=\sigma(Z_j, j\le k)$.  
According  to~\eqref{eq:martingale}, $(S_{n,k},  1\le k  \le  n, n\ge
1)$  is a  martingale array  with differences  $U_{n,k}$. It  is also
centered and square integrable from 
 Assumption~\ref{assu:smoothness}.  Thus   according  to  Theorem  3.2
 in~\cite{Hall_Heyde}    and    the    Cramér-Wold   device  \cite[see, e.g.][p. 48]{bill}, as soon as we have 
 \begin{align}
 & \max_{1\le i  \le n} \|U_{n,i}\| \xrightarrow[n\to+\infty]{}
 0 \mbox{ in } \PPs\mbox{-probability}, \label{eq:controle_max}\\ 
&  \sum_{i=1}^n U_{n,i}  U_{n,i}^\intercal \xrightarrow[n\to+\infty]{}
\Sigma_{\ts}                 \mbox{                in                }
\PPs\mbox{-probability}, \label{eq:controle_variance}\\ 
\mbox{     and    }     &    \left (     \EEs(\max_{1\le    i     \le    n}
\| U_{n,i}^{\phantom{\intercal}} U_{n,i}^\intercal \| )\right )_{n\in\Z_+}     \mbox{    is     a    bounded
  sequence}, \label{eq:controle_esp_max} 
 \end{align}
with $\Sigma_{\ts}$ a deterministic and finite covariance matrix, then
the  sum $S_{n,n}$ converges  in distribution  to a  centered Gaussian
random  variable with covariance  matrix $\Sigma_{\ts}$,  which proves
Theorem~\ref{thm:CLT_dotphi}.  Now, the 
convergence~\eqref{eq:controle_variance}  is a  direct  consequence of
the ergodic theorem  stated in Proposition~\ref{prop:lln}. Moreover
the limit $ \Sigma_{\ts}$ is given by \eqref{eq:Sigma} 
and is finite according to Assumption~\ref{assu:smoothness}. 
Note     that     more      generally,     the     ergodic     theorem
(Proposition~\ref{prop:lln})               combined               with
Assumption~\ref{assu:smoothness}    implies    the   convergence    of
$(\sum_{1\leq i \leq n}\|U_{n,i}\|^2)_n$ to a finite deterministic limit,
$\PPs$-almost    surely    and    in   $\mathbb{L}_1(\PPs)$.     Thus,
condition~\eqref{eq:controle_esp_max}      follows      from      this
$\mathbb{L}_1(\PPs)$-convergence, combined with the bound 
\[
\EEs(\max_{1\le  i  \le  n}  \|U_{n,i}U_{n,i}^\intercal\|)  \le
         \sum_{i=1}^n
\EEs(\|U_{n,i}\|^2 ). 
\]
Finally, condition~\eqref{eq:controle_max} is obtained by writing that
for any $\eps >0$ and any $q>1$, we have
\begin{align*}
\PPs( \max_{1\le i \le n} \|U_{n,i}\| \ge \eps ) &=
\PPs(\max_{1\le  i \le  n}  \|\dot\phi_{\ts}(Z_{i-1},Z_i)\| \ge
\eps \sqrt{n}) \\
&\le  \frac   1  {n^q  \eps^{2q}}   \EEs(  \max_{1\le  i   \le  n}
\|\dot\phi_{\ts}(Z_{i-1},Z_i)\|^{2q} ) \\
&\le       \frac       1       {n^q      \eps^{2q}}       \sum_{i=1}^n
\EEs( \|\dot\phi_{\ts}(Z_{i-1},Z_i)\|^{2q} ) , 
\end{align*}
where the first inequality is Markov's inequality.
By  using  again   Assumption~\ref{assu:smoothness}  and  the  ergodic
theorem  (Proposition~\ref{prop:lln}),  the  right-hand side  of  this
inequality converges to zero whenever $q>1$. This achieves the proof.


\subsection{Approximation of the Fisher information} \label{sec:approx}

We now turn to the proof of Proposition~\ref{prop:sigma_approx}. Under Assumption~\ref{assu:uniform},  the following local uniform convergence holds: there exists a neighborhood $\Vs$ of $\ts$ such that
\begin{equation} \label{equa:ULLN}
\sup_{\t \in \Vs} \left \| \frac  1n   \sum_{x=0}^{n-1} \ddot \phi_{\t} (L_{x+1}^n,L_x^n) - \tilde \pi_\ts (\ddot \phi_{\t}) \right \| \xrightarrow[n  \to  \infty]{} 0 \quad \mbox{in $\PPs$-probability}.
\end{equation}
This could be verified by the same arguments as in the proof of the standard uniform law of large numbers \cite[see Theorem 6.10 and its proof in Appendix 6.A in][]{bierens} where the ergodic theorem stated in our Proposition~\ref{prop:lln} plays the role of the weak law of large numbers for a random sample in the former reference. Indeed, let $\ddot \phi_\t ^{(i,j)}$ represent the element at the $i$th row and $j$th column of the matrix $\ddot \phi_\t$. Under Assumption~\ref{assu:uniform}, there exists a neighborhood $\mathcal{V}(\ts)$ of $\ts$ such that 
\[
\tilde  \pi_\ts \Big( \sup_{\t \in \mathcal{V}(\ts)} \left | \ddot \phi_{\t}^{(i,j)} \right | \Big) < +\infty, \quad \mbox{for any $1 \leq i,j \leq d$},
\]
which implies that
\[
\tilde  \pi_\ts \Big( \sup_{\t \in \mathcal{V}(\ts)} \ddot \phi_{\t}^{(i,j)}  \Big) < +\infty \quad \mbox{and} \quad \tilde  \pi_\ts \Big( \inf_{\t \in \mathcal{V}(\ts)} \ddot \phi_{\t}^{(i,j)}  \Big) > -\infty,
\]
for any $1 \leq i,j \leq d$. Furthermore, under Assumption~\ref{assu:smoothness}, the map $\t \mapsto \ddot \phi_{\t}^{(i,j)}$ is continuous for any $1 \leq i,j \leq d$ and according to Theorem 6.10 in  \cite{bierens}, there exists a neighborhood $\Vs$ of $\ts$ such that
\[
\sup_{\t \in \Vs} \left | \frac  1n   \sum_{x=0}^{n-1} \ddot \phi_{\t}^{(i,j)} (L_{x+1}^n,L_x^n) - \tilde \pi_\ts (\ddot \phi_{\t}^{(i,j)}) \right | \xrightarrow[n  \to  \infty]{} 0 \quad \mbox{in $\PPs$-probability} 
\]
for any $1 \leq i,j \leq d$. This implies \eqref{equa:ULLN}. The latter combined with the convergence in $\PPs$-probability of $\htet$ to $\ts$ yields \eqref{equa:Sigma_hat}.


\subsection{Proof of asymptotic normality}
\label{sec:AN}

Our estimator~$\widehat  \theta_n$  maximizes  the function  $\t  \mapsto
\ell_n(\t)=\sum_{x=0}^{n-1}  \phi_\t(L_{x+1}^n,L_{x}^n)$.  As a consequence, under Assumption~\ref{assu:diff},  we have
\begin{equation} \label{equa:max_local}
  \dot \ell_n(\htet) = \sum_{x=0}^{n-1} \dot \phi_{\htet}(L_{x+1}^n,L_{x}^n)=0.
\end{equation}
Using a Taylor expansion in a neighborhood of $\ts$, there exists a random $\tilde \t_n$ such that $\| \tilde \t_n - \ts \| \leq \| \htet - \ts \|$ and
\begin{equation} \label{equa:taylor}
  \frac 1 {\sqrt{n}} \dot \ell_n( \htet) = \frac 1 {\sqrt{n}} \dot \ell_n( \ts) + \frac 1 n \ddot \ell_n( \tilde \t_n) \cdot \sqrt{n} (\htet - \ts).
\end{equation}
Combining \eqref{equa:max_local} and~\eqref{equa:taylor} yields
\[
  \frac 1 n \ddot \ell_n( \tilde \t_n) \cdot \sqrt{n} (\htet - \ts) = -  \frac 1 {\sqrt{n}} \dot \ell_n( \ts).
\]
Using~\eqref{equa:ULLN} and the convergence in $\PPs$-probability of $\htet$  to   $\ts$ yields
\[
  \tilde \pi_\ts(  \ddot \phi_\ts) \sqrt{n}  (\htet - \ts) =  -\frac 1
  {\sqrt{n}} \dot \ell_n( \ts) (1+ o_{\PPs}(1) ),
\]
where   $o_{\PPs}(1)$   is   a   remainder   term   that   converges   in
$\PPs$-probability to 0. 
If   we   moreover  assume   that   the   Fisher  information   matrix
$\Sigma_{\ts}=- \tilde \pi_\ts (\ddot \phi_\ts)$ is non singular, then we have 
\begin{equation}
  \label{eq:LAN}
\sqrt{n}(\widehat    \theta_n   -\ts)    = \Sigma_{\ts}^{-1}\frac    1   {\sqrt
  n}\sum_{x=0}^{n-1} \dot \phi_{\ts}(L_{x+1}^n,L_{x}^n) (1+o_{\PPs}(1)).
\end{equation}


Finally,  combining~\eqref{eq:LAN}  with Theorem~\ref{thm:CLT_dotphi},  we
obtain the convergence in $\PPs$-distribution of $\sqrt{n}(\widehat
\theta_n -\ts) $ to a centered Gaussian random vector with covariance matrix
$\Sigma_{\ts}^{-1} \Sigma_\ts \Sigma_{\ts}^{-1}=\Sigma_\ts^{-1}$.


\subsection{Non degeneracy of the Fisher information}\label{sec:nonzero}

We now turn to the proof of Proposition~\ref{prop:Sigma_non_zero}. 
Let us consider a deterministic vector $u\in \R^d$. We have
\[
u^\intercal \Sigma_{\t} u = \tilde \pi_\t (\|u^\intercal \dot\phi_{\t} \|^2 ).
\]

We  recall  that  according to  Proposition~\ref{prop:moment_pi},  the
invariant probability  measure  $\pi_{\t}$ is  positive  as  well  as $  \tilde
\pi_{\t}$.  As a consequence, the quantity $u^\intercal \Sigma_{\t}
u$ is non negative and equals zero if and only if 
\[
\forall x,y\in \Z_+, \quad u^\intercal
\dot\phi_{\t}(x,y) =0 .
\]
Let us assume that the linear span in ${\mathbb R}^d$ of the gradient vectors 
$\dot \phi_{\t}(x,y), (x ,y) \in \Z_+^2$ is equal to the full
space,  or equivalently,  that
\[
{\rm                      Vect}\Big\{                      \partial_\t
\Et(\omega_0^{x+1}(1-\omega_0)^y) \, : \, (x, y)\in \Z_+^2
\Big\} ={\mathbb R}^d. 
 \]
Then,  the equality  $ u^\intercal  \dot\phi_{\t}(x,y) =0  $  for any
$(x,y)\in \Z_+^2$ implies $u=0$. This concludes the proof.


\section{Numerical performance} \label{sect:simus}
In~\cite{Comets_etal},  the authors  have  investigated the  numerical
performance of  the MLE and  obtained that this estimator  has better
performance than  the one proposed by~\cite{AdEn},  being less spread
out than  the latter. In this  section, we explore  the possibility to
construct confidence regions for  the parameter $\t$, relying on the
asymptotic         normality         result        obtained         in
Theorem~\ref{theo:CLT}.    From  Proposition~\ref{prop:sigma_approx},
the limiting covariance $\Sigma_{\ts}^{-1}$ may be approximated by the
inverse of the observed Fisher
  information matrix $\hat \Sigma_{n}$ defined by~\eqref{equa:Sigma_hat},
 and Slutsky's Lemma gives the convergence in distribution 
\[
\sqrt{n}  \hat   \Sigma_{n}^{1/2}(\htet  -\ts)  \xrightarrow [n\to
+\infty] {}\mathcal{N}_d (0,Id)  \mbox{ under } \PPs, 
\]
where 
$\mathcal{N}_d (0,Id)$ is  the centered and normalised $d$-dimensional
normal  distribution. When $d=1$, we thus  consider confidence intervals of the form 
\begin{equation}
  \label{eq:interval}
\mathcal{IC}_{\gamma,n} = \Big[\htet - \frac{q_{1-\gamma/2}}{\sqrt{n}\Sigma_{n}^{1/2}} ; \htet +\frac{q_{1-\gamma/2}}{\sqrt{n}\Sigma_{n}^{1/2}} \Big], 
\end{equation}
where  $1-\gamma$ is  the asymptotic  confidence level  and  $q_z$ the
$z$-th quantile of the standard normal one-dimensional distribution.
In higher dimensions ($d\ge 2$), the confidence regions are more generally built relying on the chi-square distribution, namely 
\begin{equation}
  \label{eq:region}
\mathcal{R}_{\gamma,n} = \{ \theta \in \Theta : n\| \hat 
\Sigma_{n}^{1/2}(\htet -\t) \|^2 \le \chi_{1-\gamma} \} , 
\end{equation}
where  $1-\gamma$ is  still the asymptotic  confidence level  and now $\chi_z$ is the
$z$-th quantile of the chi-square distribution with $d$ degrees of freedom $\chi^2(d)$. Note that the two definitions~\eqref{eq:interval} and~\eqref{eq:region} coincide when $d=1$. Moreover, the confidence region~\eqref{eq:region} is also given by 
\[
\mathcal{R}_{\gamma,n} = \{ \theta \in \Theta : n
(\htet -\t)^\intercal \hat \Sigma_{n}(\htet -\t)   \le \chi_{1-\gamma} \} . 
\]

We  present  three  simulation  settings corresponding  to  the  three
examples  developed  in  Section~\ref{sect:ex} and  already  explored
in~\cite{Comets_etal}.  For each of  the three
simulation  settings,  the  true   parameter  value  $\ts$  is  chosen
according  to  Table~\ref{tabl:theta_values}   and  corresponds  to  a
transient and ballistic random walk. We rely on 1000 iterations of
each of the following procedures.  For each setting and each iteration, we first generate
a  random environment  according  to  $\nu_\ts$ on  the  set of  sites
$\{-10^4, \dots, 10^4\}$.  Note that   we do  not use  the environment
values  for all the  $10^4$ negative  sites, since  only few  of these
sites are visited by the walk.  However this extra computation cost is
negligible. 
Then,  we  run  a  random   walk  in  this  environment  and  stop  it
successively     at     the     hitting    times     $T_n$     defined
by~\eqref{equa:HittingTime}, with $n \in \{10^3 k : 1\le k \le 10 \}$.  
For each stopping value $n$, we compute the estimators $\htet,\hat \Sigma_{n}$ 
and the confidence region $\mathcal{R}_{\gamma,n}$ for 
$\gamma= \{0.01;0.05;0.1\}$.

\begin{table}[h]
  \centering
  \begin{tabular}{|c|c|c|}
    \hline
Simulation & Fixed parameter & Estimated parameter \\
    \hline
Example~\ref{ex:deuxpoints} & $(a_1,a_2)=(0.4, 0.7)$ & $p^\star=0.3$ \\
    \hline
Example~\ref{ex:deuxpts_3param} & - & $( p^\star ,a_1^\star, a_2^\star)=(0.3,0.4,0.7)$\\ 
    \hline
Example~\ref{ex:beta} & -  & $(\alpha^\star, \beta^\star)=(5,1)$ \\
    \hline
  \end{tabular}
  \caption{Parameter values for each experiment.}
  \label{tabl:theta_values}
\end{table}

We  first explore  the  convergence of  $\hat  \Sigma_{n}$ when  $n$
increases. We  mention that the  true value $\Sigma_{\ts}$  is unknown
even   in  a   simulation   setting  (since   $\tilde  \pi_{\ts}$   is
unknown). Thus we can observe the convergence of $\hat  \Sigma_{n}$ with
$n$ but cannot assess any bias towards the true value $\Sigma_{\ts}$. The results
are                            presented                            in
Figures~\ref{fig:boxplot_Sighat_ex1},~\ref{fig:boxplot_Sighat_ex2}
and~\ref{fig:boxplot_Sighat_ex3}   corresponding  to   the   cases  of
Examples~\ref{ex:deuxpoints}, \ref{ex:deuxpts_3param} and~\ref{ex:beta}, respectively.  The
estimators appear  to converge when  $n$ increases and  their variance
also decreases as expected.  We mention that in the cases of Examples~\ref{ex:deuxpoints}
and~\ref{ex:deuxpts_3param}, we have $1\%$ and $1.3\%$
respectively of the total $10*1000$ experiments for which the
numerical   maximisation   of   the   likelihood  did   not   give   a
result  and  thus  for  which   we  could  not  compute  a  confidence
region.

\begin{figure}[h]
  \centering
  \includegraphics[height=12.5cm,width=6cm,angle=-90]{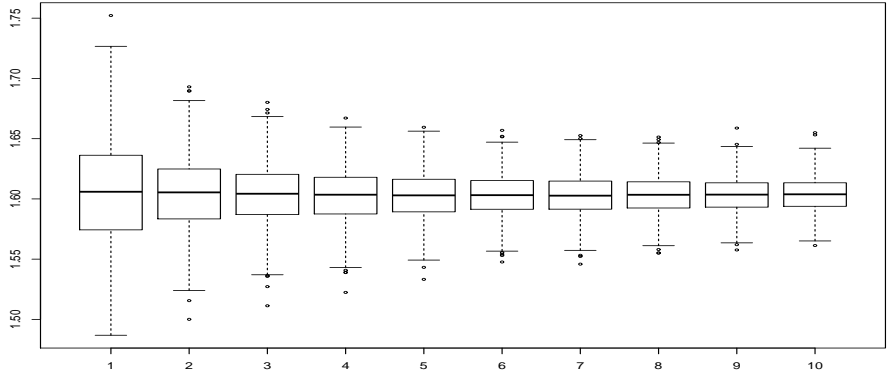}
  \caption{Boxplot  of the estimator  $\hat \Sigma_{n}$  obtained from
    $1000$ iterations and  for values $n$ ranging in  $\{10^3k : 1\le k
    \le 10\}$ in the case of Example~\ref{ex:deuxpoints}.}
  \label{fig:boxplot_Sighat_ex1}
\end{figure}

\begin{figure}[h]
  \centering
  \includegraphics[height=12.5cm,width=10cm,angle=-90]{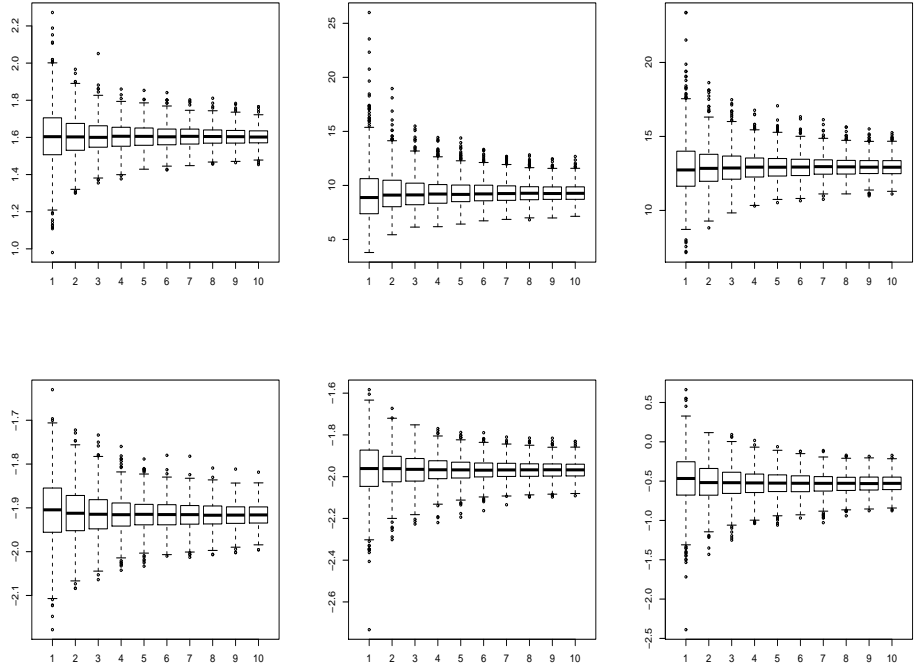}
  \caption{Boxplots of the values of the matrix $\hat \Sigma_{n}$ obtained from
    $1000$ iterations and  for values $n$ ranging in  $\{10^3k : 1\le k
    \le  10\}$ in  the case  of  Example~\ref{ex:deuxpts_3param}.  The
    parameter is ordered as 
    $\theta=(\theta_1,\theta_2,\theta_3)=(p,a_1,a_2)$ and the figure displays the
    values:   $\hat   \Sigma_n(1,1);   \hat   \Sigma_n(2,2)   ;   \hat
    \Sigma_n(3,3) ; \hat \Sigma_n(1,2); \hat \Sigma_n(1,3)$ and $\hat
    \Sigma_n(2,3) $, from left to right and top to
    bottom.}
  \label{fig:boxplot_Sighat_ex2}
\end{figure}

\begin{figure}[h]
  \centering
  \includegraphics[height=12.5cm,width=6cm,angle=-90]{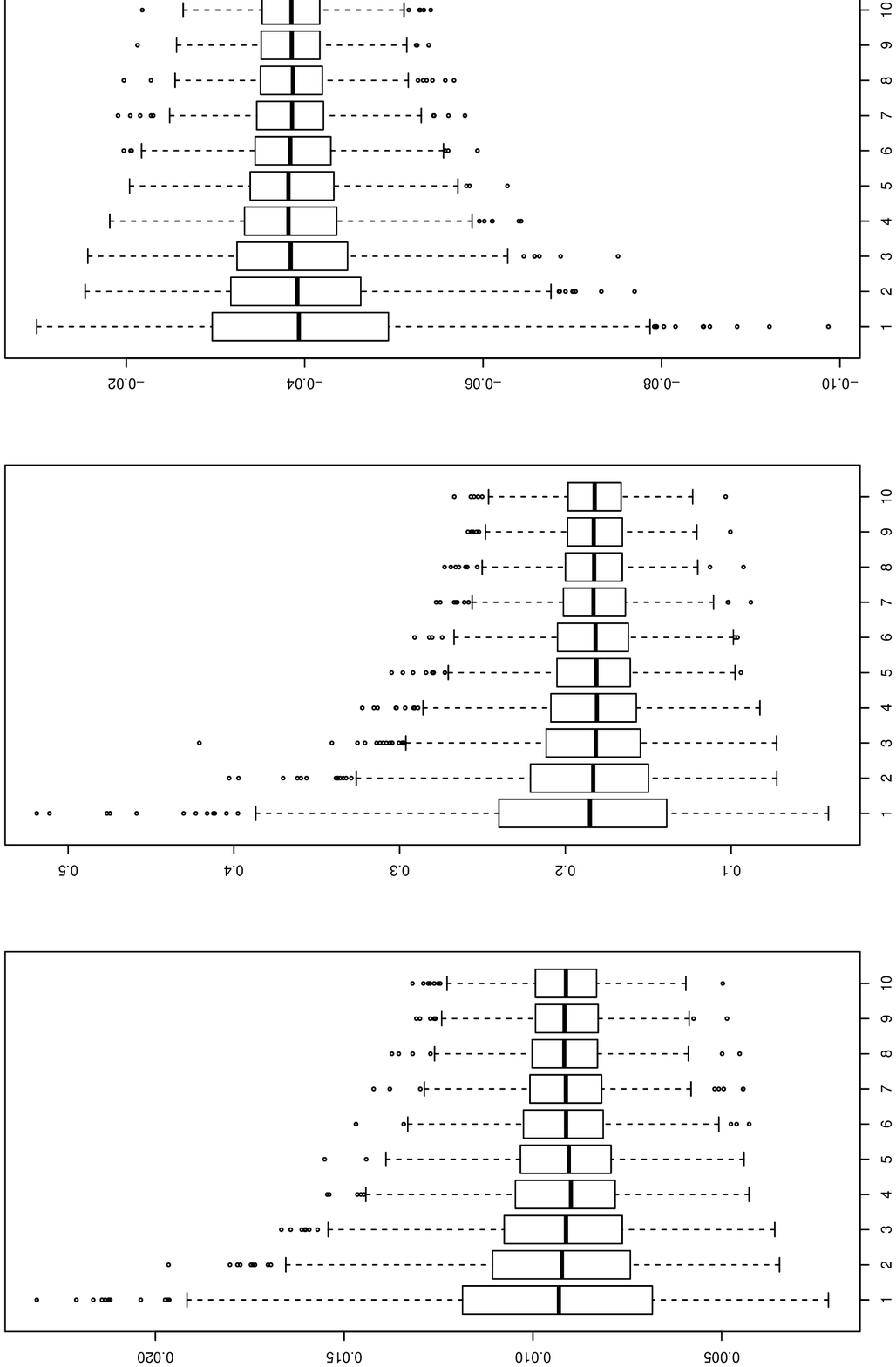}
  \caption{Boxplots of the values of the matrix $\hat \Sigma_{n}$ obtained from
    $1000$ iterations and  for values $n$ ranging in  $\{10^3k : 1\le k
    \le  10\}$ in  the case  of  Example~\ref{ex:beta}.  The
    parameter                is               ordered               as
    $\theta=(\theta_1,\theta_2)=(\alpha,\beta)$    and    the   figure
    displays the values: $\hat \Sigma_n(1,1); \hat \Sigma_n(2,2) $ and
    $\hat \Sigma_n(1,2) $, from left to right.}
  \label{fig:boxplot_Sighat_ex3}
\end{figure}

Now, we consider the empirical coverages obtained from our confidence
regions $\mathcal{R}_{\gamma,n}$ in the three examples and   with $\gamma\in \{0.01,0.05,0.1\}$ and $n$ ranging in  $\{10^3k : 1\le k
    \le      10\}$.     The      results     are      presented     in
    Table~\ref{tab:emp_cov}.  For the  three  examples, the  empirical
    coverages are very accurate. We also note that the accuracy does not
    significantly change when $n$ increases from $10^3$ to $10^4$.
As  a conclusion,  we  have shown  that  it is  possible to  construct
accurate confidence regions for the parameter value.

\begin{table}[ht]
\begin{center}
\begin{tabular}{|c|ccc|ccc|ccc|}
  \hline
&\multicolumn{3}{|c|}{Example~\ref{ex:deuxpoints} } & \multicolumn{3}{c|}{Example~\ref{ex:deuxpts_3param} } &\multicolumn{3}{c|}{Example~\ref{ex:beta} } \\
\cline{2-10}
$n$& 0.01 & 0.05 & 0.1 &0.01 & 0.05 & 0.1 &0.01 & 0.05 & 0.1 \\
  \hline
1000 & 0.994 & 0.952 & 0.899 & 0.992 & 0.953 & 0.909 & 0.977 & 0.942 & 0.901 \\ 
  2000 & 0.989 & 0.952 & 0.903 & 0.994 & 0.953 & 0.910 & 0.978 & 0.928 & 0.884 \\ 
  3000 & 0.988 & 0.942 & 0.901 & 0.990 & 0.938 & 0.886 & 0.981 & 0.940 & 0.889 \\ 
  4000 & 0.991 & 0.944 & 0.896 & 0.991 & 0.951 & 0.894 & 0.988 & 0.945 & 0.900 \\ 
  5000 & 0.990 & 0.942 & 0.896 & 0.993 & 0.942 & 0.891 & 0.986 & 0.941 & 0.883 \\ 
  6000 & 0.983 & 0.948 & 0.901 & 0.987 & 0.951 & 0.888 & 0.988 & 0.937 & 0.897 \\ 
  7000 & 0.986 & 0.950 & 0.900 & 0.992 & 0.951 & 0.900 & 0.986 & 0.942 & 0.898 \\ 
  8000 & 0.987 & 0.956 & 0.898 & 0.988 & 0.950 & 0.903 & 0.981 & 0.946 & 0.903 \\ 
  9000 & 0.990 & 0.959 & 0.913 & 0.990 & 0.949 & 0.893 & 0.985 & 0.939 & 0.901 \\ 
  10000 & 0.987 & 0.954 & 0.908 & 0.990 & 0.949 & 0.899 & 0.983 & 0.944 & 0.892 \\ 
   \hline
\end{tabular}
\caption{Empirical   coverages   of   $(1-\gamma)$  asymptotic   level
  confidence regions, for $\gamma  \in\{0.01, 0.05, 0.1\}$ and relying
  on 1000 iterations.}
\label{tab:emp_cov}
\end{center}
\end{table}


\
\\
\
\begin{merci}
 The authors warmly thank Francis Comets and Oleg Loukianov for 
 sharing many fruitful reflexions about this work.
\end{merci}


\bibliographystyle{chicago}
\bibliography{MAMA}

\end{document}